\newcommand{\LRmonotone}{anti-increasing}
\newcommand{\LRorder}{anti-increasing order}
\newcommand{\LRordered}{anti-increasingly ordered}
\newcommand{\LRmonotonelabeled}{anti-increasingly labeled}
\tikzset{
  every text node part/.style = {font=\scriptsize},
  grow=up,
  inner sep=0.2mm,
  vertex/.style={circle,draw},
  emptyvertex/.style={},
  level distance=3ex,
  level 1/.style={sibling distance=4em},
  level 2/.style={sibling distance=2em}
}
\newsavebox\wrapasybox
\newlength\boxh
\newcommand{\treebox}[1]{\raisebox{0.5ex}{\raisebox{-0.5\height}{\usebox{#1}}}}
\newenvironment{mytikzbox}{
\begin{lrbox}{\wrapasybox}
\begin{tikzpicture}
}{
\end{tikzpicture}
\end{lrbox}\treebox{\wrapasybox}
}
\theoremstyle{plain}
\newtheorem{theorem}{Theorem}
\newtheorem{lemma}[theorem]{Lemma}
\newtheorem{prop}[theorem]{Proposition}
\newtheorem{corollary}[theorem]{Corollary}
\theoremstyle{definition}
\newtheorem{definition}[theorem]{Definition}
\newtheorem{remark}[theorem]{Remark}
\numberwithin{equation}{section}
\numberwithin{theorem}{section}
\DeclareMathOperator{\IE}{\mathbf{E}}
\newcommand{\ET}{|} 
\newcommand{\abs}[1]{\left\lvert #1 \right\rvert}  
\DeclareMathOperator{\DeltaBF}{\Delta^\gamma}
\DeclareMathOperator{\ip}{\mathrm{ip}}
\newcommand{\alg}[1]{\mathcal{#1}}
\newcommand{\cc}{\kappa} 
\newcommand{\fc}{c} 
\newcommand{\bc}{h} 
\newcommand{\SP}{\cP} 
\newcommand{\intpart}{\alg{I}}  
\newcommand{\SPconn}{\SP^{conn}}  
\newcommand{\SPirr}{\SP^{irr}}  
\newcommand{\NC}{NC}  
\newcommand{\NCirr}{\NC^{irr}}  
\newcommand{\PRBT}{\mathcal{PRBT}}
\newcommand{\PRT}{\mathcal{PRT}}
\def\ff{\varphi}
\def\NN{\mathbb{N}}
\def\RR{\mathbb{R}}
\def\HH{\mathcal{H}}
\def\cP{\mathcal{P}}
\title{The normal distribution is $\boxplus$-infinitely divisible}
\author[Belinschi]{Serban T. Belinschi}
\address{Department of Mathematics and Statistics, \\
University of Saskatchewan,\\
and Institute of Mathematics of the Romanian Academy;\\
106 Wiggins Road, Saskatoon, SK, S7N 5E6, Canada
}
\email{belinschi@math.usask.ca}
\thanks{Work of S. Belinschi was supported in part
by a Discovery Grant of the Natural
Sciences and Engineering Research Council of Canada and a University of
Saskatchewan start-up grant.}
\author[Bo\.{z}ejko]{Marek Bo\.{z}ejko}
\address{Instytut Matematyczny, Uniwersytet Wroc{\l}awski,\\
 Pl.\ Grunwaldzki 2/4, 50-384 Wroc{\l}aw, Poland}
\email{bozejko@math.uni.wroc.pl}
\thanks{Work of M.Bo\.zejko was partially supported by Grant N N 201 364436 of Polish Ministry of Science
}
\author[Lehner]{Franz Lehner}
\address{Institute for  Mathematical  Structure Theory,\\
Graz Technical University,\\
Steyrergasse 30, 8010 Graz, Austria}
\email{lehner@finanz.math.tu-graz.ac.at}
\author[Speicher]{Roland Speicher}
\address{Queen's University, Department of Mathematics and Statistics,\\
Jeffery Hall, Kingston, ON K7L 3N6, Canada}
\email{speicher@mast.queensu.ca}
\thanks{Work of R. Speicher was supported by a Discovery Grant from NSERC}
\date{\today}
\begin{document}
\keywords{normal distribution, infinite divisibility, free probability,
  connected matchings, Loday-Ronco Hopf algebra, tree factorial}
\subjclass[2000]{Primary 46L54; Secondary 05C30}

\begin{abstract}
We prove that the classical normal distribution is infinitely divisible
with respect to the free additive convolution.
We study the Voiculescu transform first by giving a survey
of its combinatorial implications and then analytically,
including a proof of free infinite divisibility.
In fact we prove that a subfamily Askey-Wimp-Kerov distributions
are freely infinitely divisible, of which the normal distribution
is a special case.
At the time of this writing this is only the third example known to us
of a nontrivial distribution that is infinitely divisible
with respect to both classical and free convolution,
the others being the Cauchy distribution
and the free $1/2$-stable distribution.
\end{abstract}

\maketitle

\section{Introduction}

We will prove that the classical normal distribution is infinitely divisible with respect
to free additive convolution.

This fact might come as a surprise, since the classical Gaussian distribution has no
special role in free probability theory. The first known explicit mentioning of that
possibility to one of us was by Perez-Abreu at a meeting in Guanajuato in 2007.
This conjecture had arisen out of joint work with Arizmendi \cite{APA}.

Later when the last three of the present authors met in Bielefeld in the fall of 2008
they were led to reconsider this question in the context of investigations about general
Brownian motions. We want to give in the following some kind of context for this.

In \cite{BSp2} two of the present authors were introducing the class of generalized
Brownian Motions (GBM), i.e., families of self-adjoint operators $G(f)$ ($f\in\HH$, for
some real Hilbert space $\HH$) and a state $\ff$ on the algebra generated by the $G(f)$,
given by
$$\ff(G(f_{1})...G(f_{2n})) = \sum_{\pi\in \SP_{2}(2n)} t(\pi)
\prod_{(i,j)\in\pi}\langle f_{i},f_{j}\rangle.$$
Here $\SP_2(2n)$ denotes the set of pairings of $2n$ elements, and $t$ is a weight
function for such pairings. The concrete form of $t$ determines the specific Brownian
motion.

The most natural example for such a GBM is classical Brownian motion, where $t(\pi)= 1$
for all pairings $\pi$; in this case one gets the normal law for $G(f)$. The $q$-Brownian
motion fits into this frame by putting $t_{q}(\pi) = q^{cr(\pi)}$ (where $cr(\pi)$
denotes the number of crossings of the pairing $\pi$); in this case the law of the random
variable $G(f)$ is related with the theta function of Jacobi, and called $q$-Gaussian
distribution $\gamma_{q}$, see \cite{Ansh,BSp1,BKS}. If the parameter $q$ changes from -1
to 1, one gets an interpolation between the fermionic Brownian Motion ($q=-1$), the free
Brownian Motion ($q=0$), and the classical Brownian Motion ($q=1$).

In \cite{BSp2}, the model of the free product of classical Brownian motions resulted in a
new class of GBM, with the function $t$ given by
\begin{equation}\label{eq:connected}
t_{s}(\pi) = s^{cc(\pi)},
\end{equation}
where $cc(\pi)$ is the number of connected components of the pairing $\pi$. Here $s$ has
to be bigger than 1. This contains as a special case the result: The $2n$-th moment of
the free additive power of the normal law  $\gamma_{1}$ is given as follows:
$$m_{2n} (\gamma_{1}^{\boxplus s}) = \sum_{\pi \in
\SP_{2}(2n)} s^{cc(\pi)},$$ for $s>1$ .

In the light of earlier examples where similar combinatorial identities could be extended
beyond their primary domain of applicability (see, e.g., \cite{Boz}), it was natural to
ask whether this relation could also make any sense for $s<1$. So, in this context, a
natural problem is whether the sequence on the right side of the above formula is a
moment sequence for all $s>0$? This question is equivalent to the free infinite
divisibility of the normal law! One can check easily that the corresponding
more general question on generalized Brownian motions (i.e., whether the $t_s$ from equation
\eqref{eq:connected} is still positive for $s<1$) has a negative answer. From this point
of view, the free infinite divisibility of the classical Gauss seemed quite unlikely.
However, numerical evidence suggested the validity of that conjecture. In this paper we
will give an analytical proof for this conjecture. We want to point out that it still
remains somehow a mystery whether the $\boxplus$-infinite divisibility of the Gauss distribution is
a singular result or whether there is a more conceptual broader theory behind this.

Another example of this phenomenon was found in
\cite{PerezAbreuSakuma:2008:free},
namely that the $1/2$-free stable law
\cite{BercoviciPata:1999:stable}
is also classically infinitely divisible,
being a $\beta$-distribution of the second kind with density
$(4x-1)^{1/2} / x^2$.

\subsection{Related Questions}
One way to describe certain probability distributions on $\mathbb R$
is by specifying their orthogonal polynomials. The orthogonal 
polynomials of the classical Gaussian distribution are the so-called 
Hermite polynomials \cite{Kerov:1998:interlacing}. In \cite{Askey-Wimp}, Askey and 
Wimp describe a family of deformations, indexed by $c\in(-1,+\infty)
$, of the Hermite polynomials, called the associated Hermite 
polynomials. These polynomials are orthogonal with respect to a
family of probability measures $\{\mu_c\colon c\in(-1,+\infty)\}$,
which can be described in terms of a continued fraction expansion
of their Cauchy-Stieltjes transform as
$$
G_{\mu_c}(z)=\frac{1}{z-\displaystyle\frac{c+1}{z-\displaystyle\frac{c+2}{z-\ddots}}}.
$$
The Cauchy-Stieltjes transform of a measure $\mu$
on the real line is defined by 
$$
G_{\mu}(z)=\int_\mathbb R\frac{1}{z-t}\,d\mu(t),\quad z\in\mathbb C^+;
$$
when $\mu$ is a positive measure, 
this function maps $\mathbb C^+$ into the lower half-plane. For 
details we refer to the excellent book of Akhieser \cite{akhieser}.
For $c=0$, we have $\mu_0=\gamma$, the normal distribution 
\cite{Kerov:1998:interlacing}, and one can easily check that one can extend this 
family continuously to $c=-1$ by letting $\mu_{-1}=\delta_0$, the 
probability giving mass one to $\{0\}$. This family, introduced in
\cite{Askey-Wimp}, plays an important role in 
\cite{Kerov:1998:interlacing}; we shall
call its members the {\em Askey-Wimp-Kerov distributions}. 
It will turn out from our proof that $\{\mu_c\colon c\in[-1,0]\}$
are freely infinitely divisible.
Numerical computations show that for several values of $c>0$, $\mu_c$ 
is not freely infinitely divisible. Numerical evidence seems also to indicate that $\mu_c$
is classicaly infinitely divisible only when $c=0$ or $c=-1$.

An interesting interpolation between the normal and the semicircle law was
constructed by Bryc, Dembo and Jiang \cite{BDJ} and further investigated by Buchholz
\cite{Buch}. This leads to a generalized Brownian motion, given by a weight function
($0<b<1$)
$$t_{b}(\pi) = b^{n-h(\pi)},$$
where for a pairing $\pi$ of $2n$ elements we denote by $h(\pi)$ the number of connected
components which have only one block with 2 elements. It was calculated in
\cite{BDJ,Buch}, that the measures satisfy
$$\nu_{b^{2}} = D_b \gamma_{1}\boxplus D_{1-b}\gamma_{0},$$
where $D_{b}$ is the dilation of a measure by parameter $b>0$.
With the results of the present paper it is immediate that
these measures are  infinitely divisible with respect to free convolution.
However a short calculation shows that
these measures are not infinitely divisible with respect to classical
convolution unless $b=1$.

One other tempting example is to consider the distribution of $N\times N$ Gaussian random
matrices. For $N=1$, this is the classical Gauss distribution, whereas for $N\to\infty$
it converges to the semicircle distribution. Both of them are infinitely divisible in the
free sense (the prior by our main result here, and the latter because the semicircle is
the limit in the free central limit theorem). So one might conjecture that the
interpolating distributions, for integer $1<N<\infty$, are also freely infinitely
divisible. However, numerical calculations of the first few Jacobi coefficients of the
corresponding moments, using the Harer-Zagier recurrence, show readily that this is not
the case.

One may also ask the ``opposite'' question, whether
the Wigner distribution is infinitely divisible with respect to classical
convolution.
However this is impossible because any nontrivial classically infinitely divisible
measure has unbounded support, see
\cite[Proposition 2.3]{SteutelHarn:2004:infinite}.

For the same reason the distributions whose density is
a power of the Wigner density
are not classically infinitely divisible.
It is however an open question whether the latter are freely infinitely
divisible. 
Numerical evidence points to a positive answer to this question.
This would provide another
proof that the normal law is freely infinitely divisible. See \cite{APA} for a
survey on these questions.

In the next section we will consider the combinatorial aspects of the
free infinite divisibility of the classical Gaussian distribution; in
particular, we will give some new combinatorial interpretations for the
free cumulants of the Gaussian distribution. In Section 3, we will
then give an analytical proof of our free infinite divisibility
result.

\emph{Acknowledgements} The last three authors would like to thank
Prof.~G\"otze for the kind invitations and
hospitality at SFB 701, Bielefeld. STB would also like to thank
Michael Anshelevich for many useful discussions, especially regarding 
the Riccati equation.

\section{Combinatorial considerations}

\subsection{Partitions}
First we review a few properties of set partitions which will
be needed below. As usual, set partitions will be depicted by
diagrams like the ones shown in Figure~\ref{fig:Partitions}.
\begin{definition}
  A partition of $[n]=\{1,2,\dots,n\}$ is called
  \begin{enumerate}
   \item \emph{connected}
    if no proper subinterval of $[n]$ is a union of blocks;
    this means that any diagram depicting the partition is a connected graph.
   \item  \emph{irreducible}
    if $1$ and $n$ are in the same connected component,
    i.e., there is only one outer block.
   \item \emph{noncrossing}
    if its blocks do not intersect
    in their graphical representation, i.e., if there are no two
    distinct blocks $B_1$ and $B_2$ and elements $a, c\in B_1$ and $b,d\in B_2$
    s.t.\ $a<b<c<d$. Equivalently one could say that a partition is noncrossing
    if each of its connected components consists of exactly one block.
  \end{enumerate}
  Typical examples of these types of partitions are shown in Fig.~\ref{fig:Partitions}.

  We denote the lattice of partitions of $[n]$ by $\SP_{\!\!n}$, the irreducible
  partitions by $\SPirr_{\!\!n}$ and the order ideal of
  connected partitions by $\SPconn_{\!\!n}$; the lattice of noncrossing partitions
  will be denoted by $\NC_n$, and the sublattice of irreducible noncrossing partitions by
  $\NCirr_n$.
  Finally, let us denote by $\intpart_n$ the lattice of interval partitions,
  i.e.\ the lattice of partitions consisting entirely of intervals.
\end{definition}
\begin{center}
  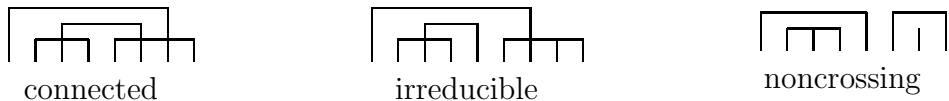
\begin{figure}[htbp]
%
%
    \begin{minipage}{.3\textwidth}
      \begin{center}
        \begin{picture}(80,20.4)(1,0)
          \put(10,0){\line(0,1){20.4}}
          \put(20,0){\line(0,1){8.4}}
          \put(30,0){\line(0,1){14.4}}
          \put(40,0){\line(0,1){8.4}}
          \put(50,0){\line(0,1){8.4}}
          \put(60,0){\line(0,1){14.4}}
          \put(70,0){\line(0,1){20.4}}
          \put(80,0){\line(0,1){8.4}}
          \put(20,8.4){\line(1,0){20}}
          \put(30,14.4){\line(1,0){30}}
          \put(10,20.4){\line(1,0){60}}
          \put(50,8.4){\line(1,0){30}}
        \end{picture}
        
        connected
      \end{center}
    \end{minipage}
%
%
    \begin{minipage}{.3\textwidth}
      \begin{center}
      \begin{picture}(90,20.4)(1,0)
        \put(10,0){\line(0,1){20.4}}
        \put(20,0){\line(0,1){8.4}}
        \put(30,0){\line(0,1){14.4}}
        \put(40,0){\line(0,1){8.4}}
        \put(50,0){\line(0,1){14.4}}
        \put(60,0){\line(0,1){8.4}}
        \put(70,0){\line(0,1){20.4}}
        \put(80,0){\line(0,1){8.4}}
        \put(90,0){\line(0,1){8.4}}
        \put(20,8.4){\line(1,0){20}}
        \put(30,14.4){\line(1,0){20}}
        \put(10,20.4){\line(1,0){60}}
        \put(60,8.4){\line(1,0){30}}
      \end{picture}
        irreducible
      \end{center}
    \end{minipage}
%
%
    \begin{minipage}{.3\textwidth}
      \begin{center}
        \begin{picture}(80,14.4)(1,0)
          \put(10,0){\line(0,1){14.4}}
          \put(20,0){\line(0,1){8.4}}
          \put(30,0){\line(0,1){8.4}}
          \put(40,0){\line(0,1){8.4}}
          \put(50,0){\line(0,1){14.4}}
          \put(60,0){\line(0,1){14.4}}
          \put(70,0){\line(0,1){8.4}}
          \put(80,0){\line(0,1){14.4}}
          \put(20,8.4){\line(1,0){20}}
          \put(10,14.4){\line(1,0){40}}
          \put(70,8.4){\line(1,0){0}}
          \put(60,14.4){\line(1,0){20}}
        \end{picture}
        
        noncrossing
      \end{center}
    \end{minipage}
    \label{fig:Partitions}
    \caption{Typical partitions}
  \end{figure}
\end{center}

\subsection{Cumulants}
Cumulants linearize convolution of probability measures coming from
various notions of independence.
\begin{definition}
  A \emph{non-commutative probability space} is a pair $(\alg{A},\phi)$
  of a (complex) unital algebra $\alg{A}$ and a unital linear functional $\phi$.
  The elements of $\alg{A}$ are called \emph{(non-commutative) random variables}.
\end{definition}
Given a notion of independence, convolution is defined as follows.
Let $a$ and $b$ be ``independent'' random variables,
then the convolution of the distributions of $a$ and $b$ is defined to be
the distribution of the sum $a+b$. In all the examples below, the distribution
of the sum of  ``independent'' random variables only depends on the individual
distributions of the summands and therefore convolution is well defined
on the level of probability measures. 
Moreover,
the $n$-th moment $m_n(a+b)$ is a polynomial function of the moments
of $a$ and $b$ of order less or equal to $n$.
For our purposes it is sufficient to axiomatize cumulants as follows.
\begin{definition}
  \label{def:cumulants}
  Given a notion of independence on a noncommutative probability space $(\alg{A},\phi)$,
  a sequence of maps $a\mapsto k_n(a)$, $n=1,2,\dots$ is called a \emph{cumulant sequence} if it satisfies the following properties
  \begin{enumerate}
   \item $k_n(a)$ is a polynomial in the first $n$ moments of $a$ with leading term $m_n(a)$.
    This ensures that conversely the moments can be recovered from the cumulants.
   \item homogeneity: $k_n(\lambda a) = \lambda^n k_n(a)$.
   \item additivity: if $a$ and $b$ are ``independent'' random variables,
    then $k_n(a+b)=k_n(a)+k_n(b)$.
  \end{enumerate}
\end{definition}
M\"obius inversion on the lattice of partitions plays a crucial role
in the combinatorial approach to cumulants.
We  need three kinds of cumulants here,
corresponding to classical, free and boolean
independence, which involve the three lattices
of set partitions, noncrossing partitions and interval partitions,
respectively.
Let $X$ be a random variable with distribution $\psi$
and moments $m_n = m_n(X) = \int x^n\, d\psi(x)$

\subsection{Classical cumulants}
  Let
  $$
  \alg{F}(z) = \int e^{xz}\,d\psi(x)
       = \sum_{n=0}^\infty \frac{m_n}{n!}\,z^n
  $$
  be the formal Laplace transform
  (or exponential moment generating function).
  Taking the formal logarithm we can write this series as
  $$
  \alg{F}(z) = e^{K(z)}
  $$
  where
  $$
  K(z) = \sum_{n=1}^\infty \frac{\cc_n}{n!}\,z^n
  $$
  is the \emph{cumulant generating function} and the numbers
  $\cc_n$ are called the \emph{(classical) cumulants}
  of the random variable $X$.

  If for a partition $\pi=\{\pi_1,\pi_2,\dots,\pi_p\}$
  we put $m_\pi = m_{\abs{\pi_1}}m_{\abs{\pi_2}}\cdots m_{\abs{\pi_p}}$
  and
  $\cc_\pi = \cc_{\abs{\pi_1}}\cc_{\abs{\pi_2}}\cdots \cc_{\abs{\pi_p}}$,
  then we can express moments and cumulants mutually 
  using the M\"obius function $\mu$ on the partition lattice $\Pi_n$
  as follows:
  $$
  m_\pi = \sum_{\sigma\leq\pi} \cc_\sigma
  \qquad
  \cc_\pi = \sum_{\sigma\leq\pi} m_\sigma \, \mu(\sigma,\pi)
  .
  $$
  For example, the standard Gaussian distribution $\gamma=N(0,1)$
  has cumulants
  $$
  \cc_n(\gamma)
  = \begin{cases}
      1 & n=2\\
      0 & n\ne 2
    \end{cases}
  $$
  It follows that the even moments $m_{2n} = \frac{2n!}{2^n n!}$
  of the standard gaussian distribution
  count the number of pairings of a set with the corresponding number of elements.

\subsection{Free Cumulants}
  Free cumulants were introduced by Speicher
  \cite{RS1} in his combinatorial
  approach to Voiculescu's free probability theory.
  Given our random variable $X$, let
  \begin{equation}
    \label{eq:ordinarymgf}
    M(z) = 1 + \sum_{n=1}^\infty m_n z^n
  \end{equation}
  be its ordinary moment generating function.
  Define a formal power series
  $$
  C(z) = 1 + \sum_{n=1}^\infty \fc_n z^n
  $$
  implicitly by the equation
  $$
  C(z) = C(zM(z))
  .
  $$
  Then the coefficients $\fc_n$ are called the \emph{free} or \emph{non-crossing cumulants}.
  The latter name stems from the fact that
  combinatorially these cumulants are obtained by M\"obius inversion on the lattice of
  non-crossing partitions:
  \begin{equation}
    \label{eq:NCcumulantsMoebiusinversion}
    m_\pi = \sum_{\substack{\sigma\in\NC_n\\ \sigma\leq\pi}} \fc_\sigma
    \qquad
    \fc_\pi = \sum_{\substack{\sigma\in\NC_n\\ \sigma\leq\pi}} m_\sigma \,\mu_{\NC}(\sigma,\pi)
  \end{equation}

\subsection{Boolean cumulants}
Boolean cumulants linearize \emph{boolean convolution}
\cite{SW}.
Let again $M(z)$ be the ordinary moment generating function of a random variable $X$
defined by \eqref{eq:ordinarymgf}.
It can be written as
$$
M(z) = \frac{1}{1-H(z)}
$$
where
$$
H(z) = \sum_{n=1}^\infty \bc_n z^n
$$
and the coefficients are called \emph{boolean cumulants}.
Combinatorially the connection between moments and boolean cumulants is described
by M\"obius inversion on the lattice of interval partitions:
\begin{equation}
  \label{eq:booleancumulantsMoebiusinversion}
  m_\pi = \sum_{\substack{\sigma\in\intpart_n\\ \sigma\leq\pi}}
           \bc_\sigma
  \qquad
  \bc_\pi = \sum_{\substack{\sigma\in\intpart_n\\ \sigma\leq\pi}}
           m_\sigma \,\mu_{\intpart}(\sigma,\pi)
\end{equation}
The connection between these kinds of cumulants is provided by the following
theorem (see also \cite{BSp2} for the case of pairings).
\begin{theorem}[{\cite{Leh}}]
  \label{thm:connectedcumulants}
  Let $(m_n)$ be a (formal) moment sequence with classical cumulants $\cc_n$.
  Then the free cumulants of $m_n$ are equal to
  \begin{equation}
    \label{eq:freeintermsofclassical}
    \fc_n = \sum_{\pi\in \SPconn_{\!\!n}} \cc_\pi
  \end{equation}
  the boolean cumulants are equal to
  $$
  \bc_n = \sum_{\pi\in \SPirr_{\!\!n}} \cc_\pi = \sum_{\pi\in \NCirr_n} \fc_\pi
  $$
\end{theorem}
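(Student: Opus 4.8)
The plan is to prove all three identities by regrouping the classical moment--cumulant expansion $m_n=\sum_{\pi\in\SP_{\!\!n}}\cc_\pi$ according to combinatorial decompositions of set partitions, and then appealing to the invertibility of the moment--cumulant systems.

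The structural input I would isolate first is this: for $\sigma\in\SP_{\!\!n}$, let $\widehat\sigma$ denote the partition of $[n]$ whose blocks are the supports (sets of base points) of the connected components of $\sigma$. Then $\widehat\sigma\in\NC_n$ --- one may check it is exactly the smallest noncrossing partition above $\sigma$ --- and the restriction $\sigma|_V$ of $\sigma$ to each block $V$ of $\widehat\sigma$ is a connected partition of $V$. Conversely, given any $\nu\in\NC_n$ and, for each $V\in\nu$, a connected partition $\tau_V$ of $V$, the union $\bigcup_V\tau_V$ has connected-component partition exactly $\nu$; the point is that two blocks of a noncrossing partition are either disjointly ordered or nested, so arcs placed on distinct blocks never cross and the pieces $\tau_V$ cannot accidentally merge into one component. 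Thus $\sigma\mapsto(\widehat\sigma,(\sigma|_V)_V)$ is a bijection, and since $\cc_\sigma=\prod_{V\in\widehat\sigma}\cc_{\sigma|_V}$ depends only on block sizes,
$$
m_n=\sum_{\sigma\in\SP_{\!\!n}}\cc_\sigma=\sum_{\nu\in\NC_n}\ \prod_{V\in\nu}\Bigl(\sum_{\tau\in\SPconn_{\abs V}}\cc_\tau\Bigr).
$$
Writing $\fc'_k:=\sum_{\tau\in\SPconn_{\!\!k}}\cc_\tau$, the right-hand side is $\sum_{\nu\in\NC_n}\prod_{V\in\nu}\fc'_{\abs V}$, which has the same shape as the free moment--cumulant relation \eqref{eq:NCcumulantsMoebiusinversion} taken at the single-block partition $\{[n]\}$, namely $m_n=\sum_{\nu\in\NC_n}\prod_{V\in\nu}\fc_{\abs V}$. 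That relation is triangular in $(m_n)$ and $(\fc_n)$, hence invertible, so $\fc_n=\fc'_n=\sum_{\pi\in\SPconn_{\!\!n}}\cc_\pi$, which is \eqref{eq:freeintermsofclassical}.

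For the boolean cumulants I would use the ``leftmost component'' decomposition: for $\sigma\in\SP_{\!\!n}$ let $k$ be the largest element of the support of the connected component of $\sigma$ containing $1$; then $\sigma|_{[1,k]}$ is irreducible, and iterating on $[k+1,n]$ expresses $\sigma$ uniquely as a concatenation of irreducible partitions along an interval decomposition $[n]=J_1\sqcup\dots\sqcup J_s$. Bijectivity again rests on the fact that partitions glued along consecutive intervals create no crossings, so components do not merge across the cuts. Consequently
$$
m_n=\sum_{s\ge1}\ \sum_{J_1\sqcup\dots\sqcup J_s=[n]}\ \prod_{t=1}^{s}\Bigl(\sum_{\rho\in\SPirr_{\abs{J_t}}}\cc_\rho\Bigr),
$$
so $M(z)=1/(1-H'(z))$ with $H'(z)=\sum_{n\ge1}\bigl(\sum_{\rho\in\SPirr_{\!\!n}}\cc_\rho\bigr)z^n$; comparing with $M(z)=1/(1-H(z))$ forces $\bc_n=\sum_{\pi\in\SPirr_{\!\!n}}\cc_\pi$. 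Running the very same interval decomposition inside $\NC_n$ --- every noncrossing partition being uniquely an ordered concatenation of irreducible noncrossing partitions, the block of $1$ determining the first interval --- applied to $m_n=\sum_{\nu\in\NC_n}\fc_\nu$ instead, yields $M(z)=1/(1-\widetilde H(z))$ with $\widetilde H(z)=\sum_{n\ge1}\bigl(\sum_{\pi\in\NCirr_n}\fc_\pi\bigr)z^n$, whence also $\bc_n=\sum_{\pi\in\NCirr_n}\fc_\pi$.

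The bookkeeping steps (triangularity of the moment--cumulant systems, multiplicativity of $\sigma\mapsto\cc_\sigma$ under restriction, the fact that gluing along consecutive intervals introduces no new crossings) are routine. The one genuinely delicate point --- and where I expect to spend most of the effort --- is the structural lemma: that the connected components of an arbitrary set partition have noncrossing supports, that the restriction to each such support is connected, and that independently chosen connected partitions on the blocks of a fixed noncrossing partition never merge into a single component. These facts are visually evident from the planar diagram but require a careful argument tracking how a chain of mutually crossing blocks encloses the points lying between its extremes.
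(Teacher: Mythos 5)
The paper itself does not prove Theorem \ref{thm:connectedcumulants}; it quotes it from \cite{Leh}, so there is no in-paper proof to measure your argument against. That said, your proposal is correct and is essentially the standard argument behind the cited result: regroup $m_n=\sum_{\sigma\in\SP_{\!\!n}}\cc_\sigma$ according to the partition $\widehat\sigma$ formed by the supports of the connected components of $\sigma$, check that $\sigma\mapsto(\widehat\sigma,(\sigma|_V)_{V\in\widehat\sigma})$ is a bijection onto pairs consisting of a noncrossing partition and a family of connected partitions on its blocks, and then invoke the triangularity (hence uniqueness) of the free moment--cumulant system \eqref{eq:NCcumulantsMoebiusinversion} to conclude $\fc_n=\sum_{\pi\in\SPconn_{\!\!n}}\cc_\pi$; the two boolean identities follow the same pattern from the unique factorization of a set partition (resp.\ a noncrossing partition) into irreducible pieces sitting on consecutive intervals, which at the level of generating functions is exactly the defining relation $M(z)=1/(1-H(z))$. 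Your converse verifications (blocks supported on two non-interleaved sets cannot cross; pieces glued along consecutive intervals never merge) are complete as written, and the multiplicativity and triangularity points are indeed routine.

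The only substantive item you leave at sketch level is the structural lemma that the supports of the connected components of an arbitrary $\sigma\in\SP_{\!\!n}$ form a noncrossing partition; note that your boolean decomposition quietly uses it too, since the claim that $[1,k]$ (with $k$ the maximum of the component of $1$) is a union of blocks is exactly the statement that no other component interleaves with the component of $1$. The lemma is true and your indicated mechanism is the right one; to make it airtight, argue for instance by contradiction: if components $C_1,C_2$ had interleaved supports, pick $a<b<c<d$ with $a,c$ in the support of $C_1$ and $b,d$ in that of $C_2$, take a chain of pairwise-crossing blocks of $C_1$ joining the block of $a$ to the block of $c$, and show by induction along the chain that some block in it has elements on both sides of $b$ (or of $d$), hence crosses the block of $C_2$ containing $b$ and $d$ or a block chained to it, merging the two components. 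With that lemma written out, your proof is complete, and it matches the combinatorial route of the reference rather than offering a genuinely different one.
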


\subsection{The normal law and pair partitions}
Let $d\gamma(t)=\frac{1}{\sqrt{2\pi}}\,e^{-\frac{t^2}{2}}dt$ be the standard normal
(classical Gaussian) distribution. Its moments are given by
$$m_k:=\int_\RR t^k d\gamma(t)=\begin{cases}
(k-1)!!:=(k-1)(k-3)(k-5)\cdots 5\cdot 3\cdot 1,&\text{$k$ even}\\
0,& \text{$k$ odd}
\end{cases}$$
A more combinatorial description of this is that the moments count all pairings, i.e.,
$$m_k=\#\SP_{\!\!2}(k).$$
From
Theorem~\ref{thm:connectedcumulants}
it follows then that the free cumulants
$\fc_n$ of $\gamma$ are given by the number of connected (or irreducible) pairings,
$$\fc_n=\#\{\pi\in \SP_{\!\!2}(n)\mid\text{$\pi$ is connected}\}.$$
The question whether $\gamma$ is infinitely divisible in the free sense is equivalent to
the question whether the sequence $(\fc_n)_{n\in\NN}$ is conditionally positive, which is
the same as the question whether the shifted sequence $(s_n)_{n\geq 0}$, where
$s_n:=\fc_{n+2}$, is positive definite, i.e., the moment sequence of some measure. See
Section 13 of \cite{NSbook} for more details on this (note that there only compactly
supported measures are considered, but the theory also extends to measures
which 
have a uniquely solvable moment problem).

The first few values of the free cumulants of the Gaussian distribution are
$$\fc_2=1,\quad\fc_4=1,\quad\fc_6=4,\quad\fc_8=27,\quad\fc_{10}=248, \quad\fc_{12}=2830,\quad\dots$$
This sequence of the numbers of irreducible diagrams of $2n$ nodes has been well-studied
from a combinatorial point of view, see, e.g., \cite{Stein1,Stein2}; it appears for
example as sequence A000699 in Sloane's \emph{Encyclopedia of Integer Sequences} \cite{Sloane:www}.
For a recent bibliography of this sequence see
\cite{Klazar:2003:nonPrecursiveness} where it is shown that
the sequence is not holonomic, i.e.,
it does not satisfy a linear recurrence with polynomial coefficients.
However, positivity questions for
this sequence have never been considered. It might be interesting to point out that these
numbers appear also in the perturbation expansion in quantum field theory for the spinor
case in 4 spacetime dimensions, see \cite{BK}
and in renormalization of quantum electrodynamics,
see \cite{Brouder:2000:trees};
however, due to the cryptic style of the mentioned papers
the meaning of this remains quite mysterious for the present authors.

\subsection{A recursive formula}
A main result on the numbers of irreducible diagrams is the following recursion formula
due to Riordan \cite{Riordan:1975:distribution}
$$\fc_{2n}=(n-1)\sum_{i=1}^{n-1} \fc_{2i} \fc_{2(n-i)},$$
a simple bijective proof of which can be found in \cite{NijenhuisWilf:1979:enumeration}.
In terms of the shifted sequence $(s_n)_{n\geq 0}$ this reads
\begin{equation}\label{eq:recursion1}
s_{2n}=n\sum_{i=0}^{n-1} s_{2i}s_{2(n-i-1)}.
\end{equation}
Note the similarity to the standard recursion of the Catalan numbers
(just remove the factor $n$ before the sum).
An equivalent formulation is
\begin{equation}\label{eq:recursion2}
s_{2n}=\sum_{i=0}^{n-1} (2i+1) s_{2i} s_{2(n-i-1)}.
\end{equation}
Thus the question is whether the sequence $(s_k)_{k\geq 0}$ -- defined by $s_{2n+1}=0$
($n\in\NN$) and by either of the recursions \eqref{eq:recursion1} or
\eqref{eq:recursion2} and $s_0=1$ -- is the moment sequence of some measure. Since
$s_0=1$, this measure must necessarily be a probability measure.
The most direct way to prove this would be to find a selfadjoint operator which has these
numbers $s_n$ as moments.

There are some immediate combinatorial interpretations of the above recursions. For
example, the recursion \eqref{eq:recursion2} yields
$$s_{2n}=\sum_{\pi\in NC_2(2n)} \prod_{V\in\pi} (\ip(V)+1).$$
Here we are summing over all non-crossing pairings of $2n$ elements and the contribution
of a pairing $\pi$ is given by a product over the blocks of $\pi$, each block
contributing the number $\ip(V)$ of its inner points plus one.
These inner points have also been counted in
\cite{Yoshida:2002:remarks,EffrosPopa:2003:feynman}
in different contexts.

\subsection{Tree factorials}
The recursion~\eqref{eq:recursion1}, on the other hand,
can be interpreted in terms of planar rooted binary trees.
These are planar rooted trees such that each vertex at most $2$
successors, called \emph{children}. A vertex without successors
is called a \emph{leaf}.
Denote the set of such trees with $n$ vertices by $\PRBT\!_n$. The
number of these trees is the $n$-th Catalan number.
The \emph{tree-factorial} is defined as follows.
For $n=0$ there is only one binary tree (the empty tree),
whose factorial is defined to be $1$.
Let $t$ be a binary tree with $n>0$ vertices.
Then $t$ can be decomposed into its root vertex,
a left branch $t_1$ with $k$ vertices and a right branch
$t_2$ with $n-1-k$ vertices and we define
$$
t! = n\cdot t_1!\, t_2!
$$
Then we have the following identity.
\begin{prop}
\begin{equation}
  \label{eq:s2n}
  s_{2n}=\sum_{t\in \PRBT\!_n} t!
\end{equation}
\end{prop}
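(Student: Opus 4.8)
The plan is a straightforward structural induction on $n$, matching the recursive definition of the tree-factorial against the recursion~\eqref{eq:recursion1}. Write $T_n := \sum_{t\in\PRBT\!_n} t!$; we want $T_n = s_{2n}$. For the base case $n=0$ there is only the empty tree, whose factorial is defined to be $1$, so $T_0 = 1 = s_0$.

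For the inductive step I would invoke the standard bijective decomposition of a nonempty planar rooted binary tree $t\in\PRBT\!_n$ into its root vertex together with the ordered pair $(t_1,t_2)$ of its left and right branches: for a unique $k\in\{0,1,\dots,n-1\}$ one has $t_1\in\PRBT\!_k$ and $t_2\in\PRBT\!_{n-1-k}$ (the root accounts for the remaining vertex), and conversely every such pair $(t_1,t_2)$ is obtained from exactly one tree. Summing the defining relation $t! = n\cdot t_1!\,t_2!$ over all $t\in\PRBT\!_n$ and grouping the terms by the size $k$ of the left branch gives
$$
T_n = n\sum_{k=0}^{n-1}\Bigl(\sum_{t_1\in\PRBT\!_k}t_1!\Bigr)\Bigl(\sum_{t_2\in\PRBT\!_{n-1-k}}t_2!\Bigr) = n\sum_{k=0}^{n-1} T_k\,T_{n-1-k}.
$$
Applying the induction hypothesis $T_j = s_{2j}$ for all $j<n$ turns the right-hand side into precisely the right-hand side of~\eqref{eq:recursion1}, whence $T_n = s_{2n}$, completing the induction.

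I do not expect any genuine obstacle here: the content of the proposition is simply that the tree-factorial recursion $t! = n\cdot t_1!\,t_2!$ is literally the recursion~\eqref{eq:recursion1} read off the Catalan decomposition of planar rooted binary trees. The only things to be careful about are the bookkeeping conventions for the empty tree and the index $n=1$ (a single root with two empty branches, giving $t! = 1\cdot 1\cdot 1 = 1$, which matches $s_2 = 1\cdot s_0 s_0 = 1$), and making sure the summation index $k$ runs over exactly $\{0,\dots,n-1\}$ so that the factor $n$ and the shift in $s_{2(n-i-1)}$ line up with the tree decomposition.
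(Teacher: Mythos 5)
Your proof is correct and is essentially the same argument the paper gives: sum the tree-factorial recursion $t! = n\cdot t_1!\,t_2!$ over the root/left-branch/right-branch decomposition to see that $\sum_{t\in\PRBT\!_n} t!$ satisfies the recursion~\eqref{eq:recursion1} with the same initial value $s_0=1$. You have merely written out the induction and the bookkeeping that the paper leaves implicit.
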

Indeed using the above decomposition it is easy to see
that the numbers on the right hand side also satisfy the
recursion~\eqref{eq:recursion1}.
For more information on tree factorials see, e.g., \cite[Section 2]{Mazza}
and \cite[Section~2]{LR} and section~\ref{ssec:Hopf} below.

Note that these interpretations are canonical for the shifted sequence $(s_n)$ and not
for the original sequence $(\fc_n)$; for example, $\fc_8=27$ is the number of irreducible
pairings of 8 points, but $s_6=27$ is given in terms of non-crossing pairings of 6 points
or, equivalently, in terms of planar binary trees with $3=6/2$ nodes.

\subsection{Two Markov chains}
\subsubsection{MTR on binary search trees}
The tree factorial appears in the stationary distribution of the
move-to-root Markov chain on binary trees \cite{DobrowFill:1995:markov}.
Binary trees are used in computer science to arrange data such that
it can be accessed using binary search. To reduce search time,
every time an entry is searched it is moved to the root of the tree
by repeating the so called \emph{simple exchange} shown in the following
picture
$$
\begin{mytikzbox}
  [grow=up,inner sep=1mm]
  \node (root)  {}
    child{ node  [fill,shape=circle]  {} edge from parent [dashed]
      child{node [fill,shape=circle,color=gray]  {}
        child{ node[shape=rectangle,draw] {c} edge from parent [solid]}
        child{ node[shape=rectangle,draw] {b} edge from parent [solid]}
        edge from parent [solid]
      }
      child{ node[shape=rectangle,solid,draw] {a} edge from parent [solid]}
    }
    child[missing]
    ;
  \draw[dashed,->,color=gray,thick](root-1-1) to [bend left=45] (root-1);
  ;
\end{mytikzbox}
\qquad
\to{}
\qquad
\begin{mytikzbox}
  [grow=up,inner sep=1mm]
  \node (root)  {}
    child{node[fill,shape=circle,color=gray]  {} edge from parent [dashed]
      child{ node[shape=rectangle,draw,solid] {c} edge from parent [solid]}
      child{node[fill,shape=circle]  {} edge from parent [solid]
        child{ node[shape=rectangle,draw] {b} edge from parent [solid]}
        child{ node[shape=rectangle,draw] {a} edge from parent [solid]}
      }
    }
    child[missing]
  ;
\end{mytikzbox}
$$
until the root position is reached.
Choosing a vertex randomly (each with probability $1/n$),
this induces a Markov chain on the state space $\PRBT\!_n$.
By Perron-Frobenius theory there is a unique
stationary distribution
$\pi$ for this Markov chain
and it is shown in \cite{DobrowFill:1995:markov}
that it is given by $\pi(t)=1/t!$.
Equivalently, it describes the distribution of a randomly grown tree.

\subsubsection{The Naimi-Trehel algorithm on planar rooted trees}

The tree factorial also appears in the so-called \emph{Naimi-Trehel} algorithm
\cite{NaimiTrehel:1987:algorithme,NaimiTrehelArnold:1996:logN}.
This is a queuing model based on yet another Catalan family,
namely planar rooted trees $\PRT$. It solves a scheduling problem for
$n$ clients (e.g., computers) who access some resource (e.g., a printer)
which can serve at most one client at a time.
In order to reduce the number of messages needed to schedule the
printer jobs,
the queue is arranged as a planar rooted tree and each time a request is
sent, the queue is rearranged. The average number of messages
is then a certain statistic on these trees.
This can be modeled as a Markov chain on labeled rooted trees
where at each step
a random client sends a request and the tree is transformed
accordingly. In the end only the shape of the tree matters
and it suffices to consider the corresponding Markov chain
on the unlabeled planar rooted trees.
By means of bijection this can be transformed into a Markov
chain on Dyck paths where we have the following algebraic
rule for the transition probabilities \cite{NaimiTrehelArnold:1996:logN}.

Let us consider words in the two letter alphabet $\{x,x^*\}$
where $x$ is an \emph{upstep} or \emph{NE} step and
$x^*$ is a \emph{downstep} or \emph{SE} step.
A \emph{Dyck word} is a word in $x$ and $x^*$ such that
each left subword contains not more downsteps than upsteps
and the whole word contains an equal number of up- and downsteps.
Dyck words can be visualized by Dyck paths, see fig.~\ref{fig:dyckpaths}.
These are lattice paths which do not descend below the $x$-axis.
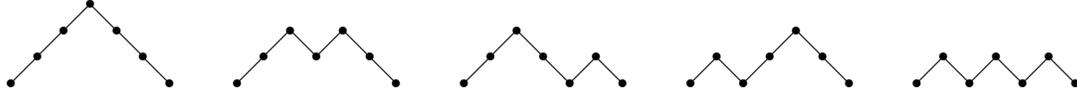
\begin{figure}[ht]
  \centering
\begin{picture}(70,30.0)(1,0)
  \put(0,0){\circle*{3.0}}
  \put(10,10){\circle*{3.0}}
  \put(0,0){\line(1,1){10.0}}
  \put(20,20){\circle*{3.0}}
  \put(10,10){\line(1,1){10.0}}
  \put(30,30){\circle*{3.0}}
  \put(20,20){\line(1,1){10.0}}
  \put(40,20){\circle*{3.0}}
  \put(30,30){\line(1,-1){10.0}}
  \put(50,10){\circle*{3.0}}
  \put(40,20){\line(1,-1){10.0}}
  \put(60,0){\circle*{3.0}}
  \put(50,10){\line(1,-1){10.0}}
\end{picture}
\quad
\begin{picture}(70,20.0)(1,0)
  \put(0,0){\circle*{3.0}}
  \put(10,10){\circle*{3.0}}
  \put(0,0){\line(1,1){10.0}}
  \put(20,20){\circle*{3.0}}
  \put(10,10){\line(1,1){10.0}}
  \put(30,10){\circle*{3.0}}
  \put(20,20){\line(1,-1){10.0}}
  \put(40,20){\circle*{3.0}}
  \put(30,10){\line(1,1){10.0}}
  \put(50,10){\circle*{3.0}}
  \put(40,20){\line(1,-1){10.0}}
  \put(60,0){\circle*{3.0}}
  \put(50,10){\line(1,-1){10.0}}
\end{picture}
\quad
\begin{picture}(70,20.0)(1,0)
  \put(0,0){\circle*{3.0}}
  \put(10,10){\circle*{3.0}}
  \put(0,0){\line(1,1){10.0}}
  \put(20,20){\circle*{3.0}}
  \put(10,10){\line(1,1){10.0}}
  \put(30,10){\circle*{3.0}}
  \put(20,20){\line(1,-1){10.0}}
  \put(40,0){\circle*{3.0}}
  \put(30,10){\line(1,-1){10.0}}
  \put(50,10){\circle*{3.0}}
  \put(40,0){\line(1,1){10.0}}
  \put(60,0){\circle*{3.0}}
  \put(50,10){\line(1,-1){10.0}}
\end{picture}
\quad
\begin{picture}(70,20.0)(1,0)
\put(0,0){\circle*{3.0}}
\put(10,10){\circle*{3.0}}
\put(0,0){\line(1,1){10.0}}
\put(20,0){\circle*{3.0}}
\put(10,10){\line(1,-1){10.0}}
\put(30,10){\circle*{3.0}}
\put(20,0){\line(1,1){10.0}}
\put(40,20){\circle*{3.0}}
\put(30,10){\line(1,1){10.0}}
\put(50,10){\circle*{3.0}}
\put(40,20){\line(1,-1){10.0}}
\put(60,0){\circle*{3.0}}
\put(50,10){\line(1,-1){10.0}}
\end{picture}
\quad
\begin{picture}(70,10.0)(1,0)
  \put(0,0){\circle*{3.0}}
  \put(10,10){\circle*{3.0}}
  \put(0,0){\line(1,1){10.0}}
  \put(20,0){\circle*{3.0}}
  \put(10,10){\line(1,-1){10.0}}
  \put(30,10){\circle*{3.0}}
  \put(20,0){\line(1,1){10.0}}
  \put(40,0){\circle*{3.0}}
  \put(30,10){\line(1,-1){10.0}}
  \put(50,10){\circle*{3.0}}
  \put(40,0){\line(1,1){10.0}}
  \put(60,0){\circle*{3.0}}
  \put(50,10){\line(1,-1){10.0}}
\end{picture}

  \caption{Dyck paths of length $6$.}
  \label{fig:dyckpaths}
\end{figure}
We denote by $1$ the Dyck word of length $0$ and $D_n$ the set
of Dyck words of length $2n$.

The recursive structure of rooted planar binary trees
has a counterpart in the unique decomposition
of a Dyck word $w$ as concatenation
$$
w=uxvx^*
$$
such that both $u$ and $v$ are again Dyck words.
Using this recursive structure
we get a natural bijection $\alpha$ from planar rooted binary trees
to Dyck words by setting recursively
$\alpha(t)=\alpha(t_1)x\alpha(t_2)x^*$
if $t$ has left and right subtrees $t_1$ and $t_2$.
Under this bijection the tree factorial on Dyck words
can be recursively computed as
$$
1! = 1
\qquad
(uxvx^*)! = n\cdot u!\, v!
$$
Dyck words form a monoid with the concatenation product
and following \cite{NaimiTrehelArnold:1996:logN}
we recursively define a linear operator on the monoid
algebra of formal linear combinations of Dyck words
by letting
$$
\mu(w) = w + \nu(w)
$$
where
$$
\nu(1) = 0
\qquad
\nu(uxvx^*) = \nu(u)\owedge(xvx^*) + \mu(v)xux^*
$$
and the operation $\owedge$ is defined as
$$
uxvx^*\owedge w = uxvwx^*
$$
For example, using these rules we have
$$
\mu(\,\begin{picture}(60,20.0)(1,0)
  \put(0,0){\circle*{3.0}}
  \put(10,10){\circle*{3.0}}
  \put(0,0){\line(1,1){10.0}}
  \put(20,20){\circle*{3.0}}
  \put(10,10){\line(1,1){10.0}}
  \put(30,10){\circle*{3.0}}
  \put(20,20){\line(1,-1){10.0}}
  \put(40,20){\circle*{3.0}}
  \put(30,10){\line(1,1){10.0}}
  \put(50,10){\circle*{3.0}}
  \put(40,20){\line(1,-1){10.0}}
  \put(60,0){\circle*{3.0}}
  \put(50,10){\line(1,-1){10.0}}
\end{picture} )
=
\,\begin{picture}(60,20.0)(1,0)
  \put(0,0){\circle*{3.0}}
  \put(10,10){\circle*{3.0}}
  \put(0,0){\line(1,1){10.0}}
  \put(20,20){\circle*{3.0}}
  \put(10,10){\line(1,1){10.0}}
  \put(30,10){\circle*{3.0}}
  \put(20,20){\line(1,-1){10.0}}
  \put(40,20){\circle*{3.0}}
  \put(30,10){\line(1,1){10.0}}
  \put(50,10){\circle*{3.0}}
  \put(40,20){\line(1,-1){10.0}}
  \put(60,0){\circle*{3.0}}
  \put(50,10){\line(1,-1){10.0}}
\end{picture}
+2\,
\,\begin{picture}(60,20.0)(1,0)
  \put(0,0){\circle*{3.0}}
  \put(10,10){\circle*{3.0}}
  \put(0,0){\line(1,1){10.0}}
  \put(20,20){\circle*{3.0}}
  \put(10,10){\line(1,1){10.0}}
  \put(30,10){\circle*{3.0}}
  \put(20,20){\line(1,-1){10.0}}
  \put(40,0){\circle*{3.0}}
  \put(30,10){\line(1,-1){10.0}}
  \put(50,10){\circle*{3.0}}
  \put(40,0){\line(1,1){10.0}}
  \put(60,0){\circle*{3.0}}
  \put(50,10){\line(1,-1){10.0}}
\end{picture}
+\,
\,\begin{picture}(60,10.0)(1,0)
  \put(0,0){\circle*{3.0}}
  \put(10,10){\circle*{3.0}}
  \put(0,0){\line(1,1){10.0}}
  \put(20,0){\circle*{3.0}}
  \put(10,10){\line(1,-1){10.0}}
  \put(30,10){\circle*{3.0}}
  \put(20,0){\line(1,1){10.0}}
  \put(40,0){\circle*{3.0}}
  \put(30,10){\line(1,-1){10.0}}
  \put(50,10){\circle*{3.0}}
  \put(40,0){\line(1,1){10.0}}
  \put(60,0){\circle*{3.0}}
  \put(50,10){\line(1,-1){10.0}}
\end{picture} 
$$
Then it is easy to see by induction that $\mu$ maps
a Dyck path $w$ of length $2n$ to a linear combination
of Dyck paths of the same length and that
the coefficients are nonnegative integers which sum up to $n+1$.
We interpret the matrix representation $A$ of this map
as a weighted adjacency matrix and obtain a digraph
with vertex set $D_n$.
Dividing the matrix by $n+1$ we obtain a stochastic
matrix $P=\frac{1}{n+1}A$.
It was shown in \cite{NaimiTrehelArnold:1996:logN} that
this is exactly the transition matrix of the Naimi-Trehel Markov chain
discussed above.
Figures~\ref{fig:graphs23} and \ref{fig:graphs4} show
the graphs for $n=2,3$ and $4$.
\begin{figure}[ht]
  \centering
    \includegraphics[width=0.15\textwidth, height=.4\textheight, keepaspectratio]{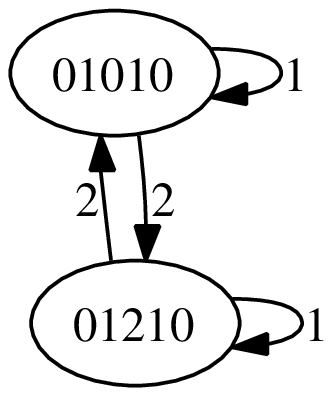}
  \includegraphics[width=0.5\textwidth, height=.4\textheight, keepaspectratio]{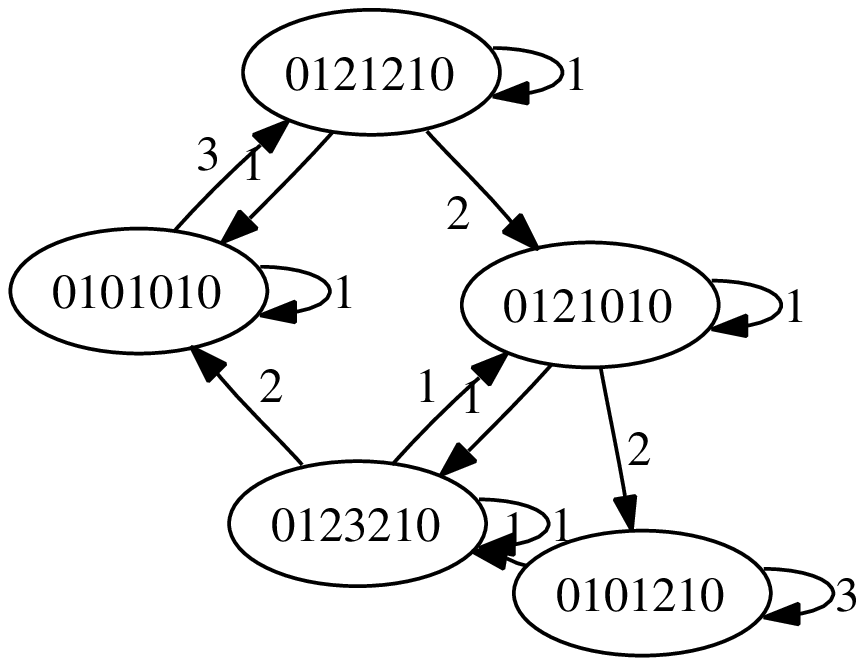}
  \caption{Naimi-Trehel graphs of order $2$ and $3$.}
  \label{fig:graphs23}
\end{figure}
\begin{figure}[ht]
  \centering
  \includegraphics[width=\textwidth, height=.4\textheight, keepaspectratio]{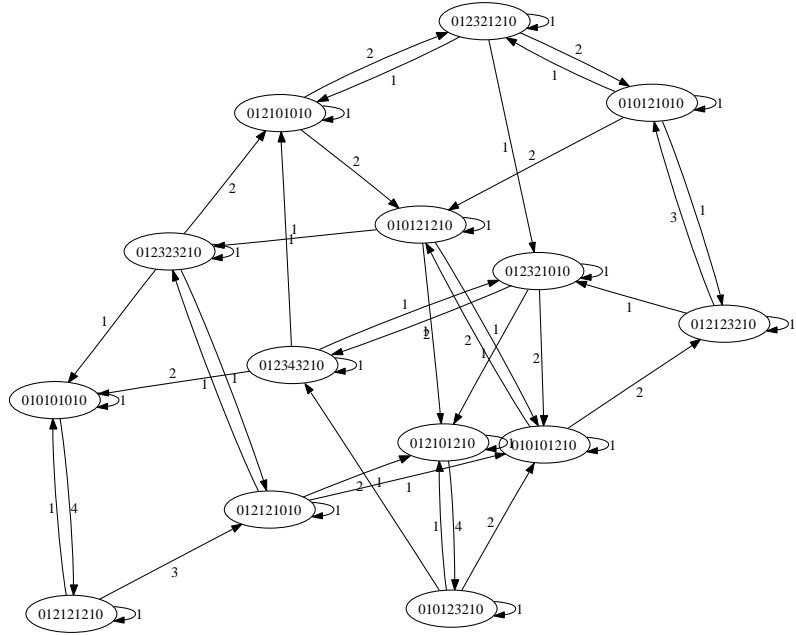}
  \caption{Naimi-Trehel graph of order $4$.}
  \label{fig:graphs4}
\end{figure}
Again there is a unique
stationary distribution
$\pi$ for this Markov chain.
It is shown in  \cite{NaimiTrehelArnold:1996:logN} that $\pi(w)=\frac{1}{w!}$
where $w!$ is the tree factorial defined above.

\subsubsection{A probabilistic interpretation}
For both Markov chains discussed above
our sequence appears as
\begin{equation}
  \label{eq:s2n=sumDn}
  s_{2n}=\sum_{w\in D_n}\frac{1}{\pi(w)}
  = \sum \IE_w T_w
  = C_n \IE T
\end{equation}
where by standard Markov chain theory
\begin{equation}
  \label{eq:1/pi=IET}
  \IE_w T_w=  \frac{1}{\pi(w)}
\end{equation}
is the expected time of a random walker starting
in $w$ to come back to $w$ for the first time
and $\IE T$ is the expected return time of a random walker
starting at a randomly chosen state $w$,
each chosen with probability $1/C_n$ (Catalan number).
Although this setting is very close to the Lindstr\"om-Gessel-Viennot
theory of determinants,
we did not manage to exploit it for a combinatorial proof of our theorem.

\subsection{Loday-Ronco Hopf algebra}
\label{ssec:Hopf}
Hopf algebras of trees have enjoyed increasing interest recently in
renormalization theory and noncommutative geometry
\cite {ConnesKreimer:1998:hopf} and pure algebra (``dendriform algebras'')
\cite{LR}. Hopf algebras of labeled trees have been studied by
Foissy \cite{Foissy:2002:algebres1,Foissy:2002:algebres2}.

We provide here yet another Hopf algebra on labeled trees
whose Hilbert series is related to our problem.
\begin{definition}
  Let $t$ be a planar rooted binary tree.
  A \emph{labeling} of $t$ is a function from
  the vertices of $t$ to the integers.
  A labeling is called \emph{\LRmonotone}
  if the labels are distinct and for every vertex $v$
  of $t$, the labels of the left subtree (with root $v$) are strictly smaller
  than the labels of the right subtree.
  In other words, if we interpret the tree as the Hasse diagram
  of a poset, every antichain has increasing labels.
  Two trees with \LRmonotone{} labelings are called \emph{equivalent}
  if the induce   the same linear order on the vertices.
  The equivalence classes are called \emph{\LRordered{} trees}.
\end{definition}

\begin{prop}
  Let $t$ be a planar rooted binary tree with $n$ vertices.
  Then the tree factorial $t!$ counts the number of
  \LRmonotone{} orderings or equivalently the
  \LRmonotone{} labelings with different numbers $1,2,\dots,n$.
\end{prop}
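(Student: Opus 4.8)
The plan is to prove both statements simultaneously by induction on the number $n$ of vertices of $t$, exploiting the recursive decomposition that already underlies the definition of the tree factorial: write $t=(t_1,r,t_2)$, where $r$ is the root, $t_1$ is the left branch (with $k$ vertices) and $t_2$ is the right branch (with $n-1-k$ vertices), either branch possibly empty, so that $t!=n\cdot t_1!\,t_2!$.

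First I would reduce the two formulations to a single counting problem. Since the defining inequalities of an \LRmonotone{} labeling only compare labels with one another, whether a labeling is \LRmonotone{} depends only on the linear order it induces on the vertices. Replacing each label by its rank therefore produces, in every equivalence class of \LRmonotone{} labelings, exactly one representative that uses the values $1,2,\dots,n$, and distinct such representatives lie in distinct classes. Hence it suffices to prove that the number $f(t)$ of \LRmonotone{} labelings $\ell\colon V(t)\to\{1,2,\dots,n\}$ equals $t!$.

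The main step is to establish the recursion $f(t)=n\cdot f(t_1)\,f(t_2)$ for $n\ge 1$. Given an \LRmonotone{} labeling $\ell$ of $t$, its restrictions to $t_1$ and to $t_2$ are again \LRmonotone{}, because the condition imposed at any vertex $v\neq r$ involves only the subtree hanging from $v$, which lies entirely within $t_1$ or entirely within $t_2$; and the sole condition imposed at $r$ itself is that every label occurring in $t_1$ be strictly smaller than every label occurring in $t_2$ (vacuously true when one branch is empty). Conversely, to build such an $\ell$ one may (i) choose the root value $\ell(r)\in\{1,\dots,n\}$, leaving $n-1$ values; (ii) observe that the condition at $r$ forces $\ell(V(t_1))$ to be the $k$ smallest and $\ell(V(t_2))$ the $n-1-k$ largest of these $n-1$ values, so both value sets are determined by the choice in (i); and (iii) choose \LRmonotone{} labelings of $t_1$ and of $t_2$ with the prescribed value sets. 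Since the \LRmonotone{} property depends only on the relative order of the labels, there are exactly $f(t_1)$ choices for the first labeling and $f(t_2)$ for the second, made independently, and hence $f(t)=n\cdot f(t_1)\,f(t_2)$; the degenerate cases $k=0$ and $k=n-1$ are covered by the rule that the empty tree has factorial $1$.

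Finally, the base case $n=0$ is the trivial identity ($f=1=t!$, the empty tree having factorial $1$), and for $n\ge 1$ one combines the recursion for $f$ with the identical recursion $t!=n\cdot t_1!\,t_2!$ together with the induction hypothesis $f(t_i)=t_i!$, valid because $t_1$ and $t_2$ have fewer than $n$ vertices. I do not anticipate a genuine obstacle; the one point requiring care is step (ii) — recognizing that the single inequality at the root completely determines how the label set splits between the two branches, so that the remaining freedom is precisely the choice of root value together with two independent sub-labelings — and the routine verification that the empty-branch cases are absorbed by this inequality being vacuous.
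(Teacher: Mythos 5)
Your argument is correct and is exactly the paper's proof: the paper simply states that the result follows ``by a simple induction using the recursive definition of the tree factorial,'' which is precisely the recursion $f(t)=n\cdot f(t_1)\,f(t_2)$ you establish (root label free, split of the remaining values forced, independent sub-labelings), together with the rank-normalization identifying labelings by $1,\dots,n$ with order classes. No gaps; your write-up just makes the induction explicit.
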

The proof is a simple induction using the recursive definition
of the tree factorial.

Figure~\ref{fig:LRmonotone} shows an example of a \LRmonotone{} tree.
\begin{figure}[ht]
  \centering
\begin{tikzpicture}[every text node part/.style = {font=\normalsize},  level distance=4ex]
  \node (g3) [vertex] {$3$}
       child{node (g7)[vertex] {$7$}
         child{node (g6)[vertex] {$6$} }
         child{node (g5)[vertex] {$5$} }
       }
       child{node (g2)[vertex] {$2$}
         child{node (g4)[vertex] {$4$} }
         child{node (g1)[vertex] {$1$} }
       }
  ;
\end{tikzpicture}
  \caption{An \LRordered{} tree}
  \label{fig:LRmonotone}
\end{figure}
The formal linear combinations
of labeled planar rooted binary trees form a graded vector space, 
the grading being given by the
number of vertices of the trees.
It is then straightforward to generalize the coproduct of
Loday and Ronco~\cite{LR}
to labeled trees as follows \cite{Foissy:2002:algebres1,Foissy:2002:algebres2}.
Let $s$ and $t$ be labeled binary trees.
We define a new labeled binary tree $s\vee_k t$
by grafting them on a new root with label $k$:
$$
\begin{tikzpicture}[every text node part/.style = {font=\normalsize}]
  \node (k) [vertex] {$k$}
       child{node (t) [vertex] {$t$}
         child{node (t1) [emptyvertex] {} edge from parent [dashed]}
         child{node (t2) [emptyvertex] {} edge from parent [dashed]}
         }
       child{node (s)[vertex] {$s$}
         child{node (s1) [emptyvertex] {} edge from parent [dashed]}
         child{node (s2) [emptyvertex] {} edge from parent [dashed]}
       }
  ;
\end{tikzpicture}
$$

Contrary to the Butcher-Connes-Kreimer Hopf algebra of rooted trees,
binary trees cannot be grafted from forests.
Therefore the left part of the Loday-Ronco coproduct does not consist
of forests, but rather a certain noncommutative ``dendriform''
product of binary trees.
The product of two labeled binary trees $s=s_1\vee_{k} s_2$
and $t=t_1\vee_{l} t_2$
is recursively defined as
\begin{equation}
  \label{eq:LodayRoncoproduct}
  s*t = s_1\vee_{k}(s_2*t) + (s*t_1)\vee_l t_2
\end{equation}
with the convention that for the empty tree $\ET$ the product is
$$
\ET*t = t*\ET = t.
$$
This operation is associative and
the coproduct of $t=u \vee_k v$ is recursively defined as
$$
\Delta(t) = \Delta(u)\ovee_k\Delta(v) + t\otimes\ET
$$
where in Sweedler's notation we define
for $\Delta(u) = \sum u_{(1)}\otimes u_{(2)}$
and $\Delta(v) = \sum v_{(1')}\otimes v_{(2')}$
$$
\Delta(u)\ovee_k\Delta(v) =
\sum\sum (u_{(1)}*v_{(1')})\otimes (u_{(2)}\vee_k v_{(2')})
.
$$
\begin{prop}
  \begin{enumerate}
  \item
    Let $s$ and $t$ be \LRmonotonelabeled{} trees such that the labels
    of $s$ are smaller than the labels of $t$.
    Then $s*t$ is a sum of \LRmonotonelabeled{} trees.
  \item
    Let $t$ be a \LRmonotonelabeled{} tree.
    Then all terms of $\Delta(t)$ contain
    \LRmonotonelabeled{} trees only.
  \end{enumerate}
\end{prop}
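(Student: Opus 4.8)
The plan is to prove both parts by induction on the number of vertices, establishing part~(1) first and then deducing part~(2) from it. Throughout I would carry along the bookkeeping observation that neither $*$ nor $\Delta$ ever invents a new label: every tree occurring in $s*t$ has label set contained in (indeed equal to) the disjoint union of the label sets of $s$ and $t$, and both tensor legs of every term of $\Delta(t)$ have label sets contained in that of $t$. Both are immediate inductions on the defining recursions. I would also repeatedly use the two consequences of the definition of an $\LRmonotonelabeled{}$ tree $t_1\vee_l t_2$: its maximal proper subtrees $t_1,t_2$ are again $\LRmonotonelabeled{}$, and every label of $t_1$ is strictly smaller than every label of $t_2$.

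For part~(1) I would induct on $|s|+|t|$, the cases $s=\ET$ or $t=\ET$ being trivial because then $s*t$ is $t$ or $s$. Otherwise write $s=s_1\vee_k s_2$ and $t=t_1\vee_l t_2$, so that by \eqref{eq:LodayRoncoproduct}
$$
s*t = s_1\vee_k (s_2*t) + (s*t_1)\vee_l t_2 .
$$
Take a term $s_1\vee_k T$ with $T$ a tree occurring in $s_2*t$. Since the labels of $s_2$ lie among those of $s$, they precede all labels of $t$, so the induction hypothesis applies to $s_2*t$ and $T$ is $\LRmonotonelabeled{}$; and $s_1$ is $\LRmonotonelabeled{}$ as a subtree of $s$. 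The labels of $s_1$ are smaller than those of $s_2$ (definition, at the root of $s$) and smaller than those of $t$ (they are among the labels of $s$), hence smaller than all labels of $T$; finally $k$, the root label of $s$, is distinct from all of them. Thus $s_1\vee_k T$ is $\LRmonotonelabeled{}$. The terms $(s*t_1)\vee_l t_2$ are identical in spirit: the induction hypothesis applies to $s*t_1$ since the labels of $s$ precede all labels of $t\supseteq t_1$; $t_2$ is $\LRmonotonelabeled{}$; and the label set of $s*t_1$, which is that of $s$ together with that of $t_1$, lies entirely below the labels of $t_2$ because $t$ is $\LRmonotonelabeled{}$ and the labels of $s$ precede all of $t$. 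This proves part~(1).

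For part~(2) I would induct on $|t|$, the case $t=\ET$ being trivial. Write $t=u\vee_k v$, so $u$ and $v$ are $\LRmonotonelabeled{}$ with every label of $u$ strictly below every label of $v$, and recall
$$
\Delta(t)=\Delta(u)\ovee_k\Delta(v) + t\otimes\ET .
$$
The term $t\otimes\ET$ is as wanted. A generic term of $\Delta(u)\ovee_k\Delta(v)$ is $(u_{(1)}*v_{(1')})\otimes(u_{(2)}\vee_k v_{(2')})$, where $u_{(1)}\otimes u_{(2)}$ ranges over the terms of $\Delta(u)$ and $v_{(1')}\otimes v_{(2')}$ over those of $\Delta(v)$. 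By the induction hypothesis all four of $u_{(1)},u_{(2)},v_{(1')},v_{(2')}$ are $\LRmonotonelabeled{}$, the $u$-pieces carrying labels among those of $u$ and the $v$-pieces labels among those of $v$; hence the labels of $u_{(1)}$ are all below those of $v_{(1')}$ and likewise for $u_{(2)}$ and $v_{(2')}$, and all label sets in sight are pairwise disjoint and avoid $k$. Now part~(1) gives that $u_{(1)}*v_{(1')}$ is a sum of $\LRmonotonelabeled{}$ trees, and $u_{(2)}\vee_k v_{(2')}$ is $\LRmonotonelabeled{}$ straight from the definition. So every term of $\Delta(t)$ is a tensor product of $\LRmonotonelabeled{}$ trees.

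The whole argument is essentially bookkeeping, and the \emph{only} point that requires attention — the reason the two parts must be handled jointly and in this order — is tracking which labels land in which piece under $*$ and $\Delta$: one needs to know that the hypothesis ``labels of $s$ below labels of $t$'' of part~(1) is genuinely available each time part~(1) is invoked inside part~(2), and this is exactly what the label-containment observation together with the global inequality ``labels of $u$ below labels of $v$'' guarantee. Once that invariant is isolated, everything else follows mechanically from the recursive definitions.
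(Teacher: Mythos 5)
Your proof is correct and follows essentially the same route as the paper: induction on the recursions \eqref{eq:LodayRoncoproduct} and $\Delta(u\vee_k v)=\Delta(u)\ovee_k\Delta(v)+t\otimes\ET$, proving (1) first and feeding it into (2). The only difference is that you spell out the label-containment bookkeeping that the paper leaves implicit, which is a sound (and slightly more careful) presentation of the same argument.
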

\begin{proof}
  \begin{enumerate}
  \item By induction,
    let $s=s_1\vee_k s_2$ and $t=t_1\vee_l t_2$,
    then $s*t=s_1\vee_k(s_2*t) + (s*t_1)\vee_l t_2$.
    By induction hypothesis, the monomials of both $s_2*t$ and $s*t_1$
    are \LRmonotonelabeled{},
    the labels of $s_1$ are smaller than the labels of $s_2*t$
    and the labels of $s*t_1$ are smaller than the labels of $t_2$.
  \item Assume that $t=u\vee_k v$ has \LRmonotone{} labels and its
    children have coproducts
    $\Delta(u) = \sum u_{(1)}\otimes u_{(2)}$
    and $\Delta(v) = \sum v_{(1')}\otimes v_{(2')}$,
    then by the preceding calculation each term $u_{(1)}*v_{(1')}$
    is a sum of \LRmonotonelabeled{} trees and each $u_{(2)}\vee_k v_{(2')}$
    has \LRmonotone{} labels as well, therefore
    $$
    \Delta(t) = \sum\sum (u_{(1)}*v_{(1')}) \otimes (u_{(2)}\vee_k v_{(2')})
    +t\otimes\ET
    $$
    only contains \LRmonotonelabeled{} terms.
  \end{enumerate}
\end{proof}
It follows that the \LRmonotonelabeled{} trees form a graded sub-coalgebra,
but not a subalgebra because the product of \LRmonotonelabeled{} trees
does not consist of \LRmonotonelabeled{} trees in general.
Passing to \LRordered{} trees we obtain a Hopf algebra.
\begin{corollary}
  The \LRordered{} trees span a Hopf algebra whose Hilbert series
  is the generating series of our sequence $(s_n)$.
\end{corollary}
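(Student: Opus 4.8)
The plan is to realize the graded vector space $W=\bigoplus_n W_n$ spanned by the \LRordered{} trees as a quotient of the space $V$ spanned by all \LRmonotonelabeled{} planar rooted binary trees with distinct integer labels, the quotient map $\rho$ identifying two labeled trees that induce the same linear order on their vertices, and then to transport along $\rho$ the Hopf structure of the Loday-Ronco Hopf algebra $\mathcal H$ of labeled binary trees (in the form extended to labeled trees by Foissy \cite{Foissy:2002:algebres1,Foissy:2002:algebres2}). The first and essentially bookkeeping step is to read off the Hilbert series. For a fixed shape $t\in\PRBT\!_n$ the \LRmonotone{} labelings by $\{1,\dots,n\}$ are pairwise inequivalent, since each is already a linear order on the vertices, and by the Proposition identifying $t!$ with the number of \LRmonotone{} labelings there are exactly $t!$ of them; labelings of distinct shapes are visibly inequivalent. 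Hence $\dim W_n=\sum_{t\in\PRBT\!_n}t!=s_{2n}$ by~\eqref{eq:s2n}, and grading an $n$-vertex tree in degree $2n$ (so that $s_{2n}$ counts exactly what sits in that degree) the Hilbert series of $W$ is $\sum_k s_k z^k$, the generating series of $(s_n)$.

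Next I would push the coalgebra structure through $\rho$. By part~(2) of the preceding Proposition, $\Delta$ maps $V$ into $V\otimes V$; moreover the recursive definitions of $\vee_k$, $*$ and $\Delta$ refer to the labels only through order comparisons, so all three operations are equivariant under order-preserving relabelings. Consequently $\Delta$ descends to a well-defined coassociative coproduct on $W$, with counit extracting the coefficient of the empty tree $\ET$. The product is the delicate point, precisely because $V$ fails to be a subalgebra of $\mathcal H$; the idea is to use the slack gained by passing to equivalence classes. Given \LRordered{} trees $\bar s,\bar t$, I would choose representatives $s,t$ all of whose labels of $s$ lie below all those of $t$ (say $s$ labeled by $\{1,\dots,n\}$ and $t$ by $\{n+1,\dots,n+m\}$); part~(1) of the Proposition then guarantees $s*t\in V$, and one sets $\bar s\cdot\bar t:=\rho(s*t)$. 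Order-equivariance makes this independent of the representatives, and once disjoint blocks of consecutive integers are fixed for successive tensor factors, associativity, unitality, and the compatibility of product and coproduct all follow from the corresponding identities in $\mathcal H$, since at every stage of the iterated computations the factor multiplied on the left carries labels preceding those on the right --- exactly the hypothesis of part~(1). Finally $W$ is connected and finite-dimensional in each degree, so it automatically admits an antipode and is a Hopf algebra.

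The main obstacle is the product step: one must verify not merely that $\bar s\cdot\bar t$ lands back in $W$, which is immediate from part~(1) of the Proposition, but that the ``multiply with the labels of $s$ below those of $t$'' recipe is genuinely consistent --- well defined on classes, associative, and compatible with the coproduct --- and the cleanest route is the observation above, that after assigning the successive tensor factors disjoint blocks of consecutive integers every product occurring in the iterated computations stays inside the regime covered by part~(1), so that each required identity can simply be read off inside $\mathcal H$. A secondary point to watch is the grading normalization: since $s_{2n+1}=0$ and it is $s_{2n}$ that enumerates the $n$-vertex trees, one should double the vertex grading so that the Hilbert series is literally $\sum_k s_k z^k$.
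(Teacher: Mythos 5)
Your proposal is correct and follows essentially the same route as the paper: define the product by choosing representative \LRmonotone{} labelings with the labels of $s$ below those of $t$ (justified by part (1) of the proposition), let the coproduct descend via part (2), note independence of the choice of labelings, and obtain the antipode from the graded connected structure; the Hilbert series identification via $\dim W_n=\sum_{t\in\PRBT\!_n}t!=s_{2n}$ from \eqref{eq:s2n} is the same counting the paper relies on. Your extra care about order-equivariance and the grading normalization only makes explicit what the paper leaves implicit.
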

\begin{proof}
  We have seen that \LRmonotonelabeled{} trees form a coalgebra.
  Define the product of \LRordered{} trees $s$ and $t$ as follows:
  Put arbitrary \LRmonotone{} labelings on $s$ and $t$ such that the labels of
  $s$ are smaller than the labels of $t$. Then $s*t$ consists of
  \LRmonotonelabeled{}
  trees
  and the corresponding \LRordered{} equivalence classes
  do not depend on the choice of the labelings for $s$ and $t$.
  Define the coproduct of an \LRordered{} tree by choosing
  an \LRmonotone{} labeling compatible with the \LRorder{},
  compute the coproduct of the obtained \LRmonotonelabeled{} tree and
  replace the \LRmonotonelabeled{}
  terms of the result by the corresponding \LRordered{} trees.
  Again the choice of the initial \LRmonotone{} labeling
  has no influence on the final result;
  moreover, the obtained coalgebra is graded and connected,
  i.e., the first homogeneous component is one-dimensional,
  and the existence of the antipode follows  by standard Hopf algebra theory.
\end{proof}
Some examples:
$$
\begin{aligned}
  \Delta(\ET)
  &=\ET\otimes\ET\\
  \Delta(
\begin{mytikzbox}
  \node (1) [vertex] {$1$};
\end{mytikzbox}
)
  &= \ET\otimes
\begin{mytikzbox}
  \node (1) [vertex] {$1$};
\end{mytikzbox}
  +
\begin{mytikzbox}
  \node (1) [vertex] {$1$};
\end{mytikzbox}
  \otimes\ET \\
%
%
  \Delta(
  \begin{mytikzbox}
    [level 1/.style={sibling distance=1.2em}]
    \node (o) [vertex] {$1$}
        child {node [vertex] {$2$}}
        child [missing] 
    ;
  \end{mytikzbox}
  )
  & = \ET\otimes
  \begin{mytikzbox}
    [level 1/.style={sibling distance=1.2em}]
    \node (o) [vertex] {$1$}
        child {node [vertex] {$2$}}
        child [missing] 
    ;
  \end{mytikzbox}
  +
  \begin{mytikzbox}
    \node (o) [vertex] {$1$}
    ;
  \end{mytikzbox}
  \otimes
  \begin{mytikzbox}
    \node (o) [vertex] {$1$}
    ;
  \end{mytikzbox}
  +
  \begin{mytikzbox}
    [level 1/.style={sibling distance=1.2em}]
    \node (o) [vertex] {$1$}
        child {node [vertex] {$2$}}
        child [missing] 
    ;
  \end{mytikzbox}
  \otimes \ET\\
\end{aligned}
$$

%
%
$$
\begin{aligned}
  \Delta(
  \begin{mytikzbox}
    [level 1/.style={sibling distance=1.2em}]
    \node (o) [vertex] {$3$}
        child {node [vertex] {$2$}}
        child {node [vertex] {$1$}}
    ;
  \end{mytikzbox}
  )
  &=
  \ET
  \otimes
  \begin{mytikzbox}
    [level 1/.style={sibling distance=1.2em}]
    \node (o) [vertex] {$3$}
        child {node [vertex] {$2$}}
        child {node [vertex] {$1$}}
    ;
  \end{mytikzbox}
  +
  \begin{mytikzbox}
    [level 1/.style={sibling distance=1.2em}]
    \node (o) [vertex] {$1$}
    ;
  \end{mytikzbox}
  \otimes
  \begin{mytikzbox}
    [level 1/.style={sibling distance=1.2em}]
    \node (o) [vertex] {$2$}
        child [missing]
        child {node [vertex] {$1$}}
    ;
  \end{mytikzbox}
  +
  \begin{mytikzbox}
    [level 1/.style={sibling distance=1.2em}]
    \node (o) [vertex] {$1$}
    ;
  \end{mytikzbox}
  \otimes
  \begin{mytikzbox}
    [level 1/.style={sibling distance=1.2em}]
    \node (o) [vertex] {$2$}
        child {node [vertex] {$1$}}
        child [missing]
    ;
  \end{mytikzbox}
  +
  \begin{mytikzbox}
    [level 1/.style={sibling distance=1.2em}]
    \node (o) [vertex] {$1$}
        child {node [vertex] {$2$}}
        child [missing]
    ;
  \end{mytikzbox}
  \otimes
  \begin{mytikzbox}
    [level 1/.style={sibling distance=1.2em}]
    \node (o) [vertex] {$1$}
    ;
  \end{mytikzbox}
  +
  \begin{mytikzbox}
    [level 1/.style={sibling distance=1.2em}]
    \node (o) [vertex] {$2$}
        child [missing]
        child {node [vertex] {$1$}}
    ;
  \end{mytikzbox}
  \otimes
  \begin{mytikzbox}
    [level 1/.style={sibling distance=1.2em}]
    \node (o) [vertex] {$1$}
    ;
  \end{mytikzbox}
  +
  \begin{mytikzbox}
    [level 1/.style={sibling distance=1.2em}]
    \node (o) [vertex] {$3$}
        child {node [vertex] {$2$}}
        child {node [vertex] {$1$}}
    ;
  \end{mytikzbox}
  \otimes
  \ET
\end{aligned}
$$

\subsection{Charge Hopf algebra}
There is another coproduct (``charge Hopf algebra'' of Brouder and Frabetti)
in \cite{BrouderFrabetti:2003:QED}, also mentioned in
\cite[sec.~2.6]{Holtkamp:2003:comparison} which is more asymmetric
than the Loday-Ronco Hopf algebra.
First we define an associative multiplication $s/t$ by putting
$\ET/s = s/\ET=s$ and otherwise grafting $s$ onto the leftmost leaf of $t$.
Recursively, $s/(t_1\vee t_2) = (s/t_1)\vee t_2$. This makes sense for labeled
trees as well (just keeping the labels) and for LR-ordered trees
we define $s/t$ by shifting the labels of $t$ such that all labels
of $s$ are less than the labels of $t$ before carrying out the product.
Then the algebra is generated by all elements of the form
$\ET\vee_k t$ which we denote by $V_k(t)$.
The Brouder-Frabetti coproduct is defined recursively as
$$
\begin{aligned}
\DeltaBF(\ET)&=\ET\otimes \ET\\
\DeltaBF(
  \begin{mytikzbox}
    [level 1/.style={sibling distance=1.2em}]
    \node (o) [vertex] {$1$}
    ;
  \end{mytikzbox}
)
&=
  \begin{mytikzbox}
    [level 1/.style={sibling distance=1.2em}]
    \node (o) [vertex] {$1$}
    ;
  \end{mytikzbox}
\otimes
\ET
+
\ET
\otimes
  \begin{mytikzbox}
    [level 1/.style={sibling distance=1.2em}]
    \node (o) [vertex] {$1$}
    ;
  \end{mytikzbox}
\\
\DeltaBF(V_k(s\vee_l t)) &= V_k(s\vee_l t)\otimes\ET + id\otimes V_k(\DeltaBF(s)/(\DeltaBF(V_l(t))-V_l(t)\otimes\ET)
\end{aligned}
$$
Again the monotone trees form a subalgebra.
Examples:
$$
\begin{aligned}
%
%
\DeltaBF(
  \begin{mytikzbox}
    [level 1/.style={sibling distance=1.2em}]
    \node (o) [vertex] {$1$}
        child {node [vertex] {$2$}}
        child [missing]
    ;
  \end{mytikzbox}
)
&= 
  \begin{mytikzbox}
    [level 1/.style={sibling distance=1.2em}]
    \node (o) [vertex] {$1$}
        child {node [vertex] {$2$}}
        child [missing]
    ;
  \end{mytikzbox}
\otimes
\ET
+
\ET
\otimes
  \begin{mytikzbox}
    [level 1/.style={sibling distance=1.2em}]
    \node (o) [vertex] {$1$}
        child {node [vertex] {$2$}}
        child [missing]
    ;
  \end{mytikzbox}
\\
%
%
\DeltaBF(
  \begin{mytikzbox}
    [level 1/.style={sibling distance=1.2em}]
    \node (o) [vertex] {$1$}
        child [missing]
        child {node [vertex] {$2$}}
    ;
  \end{mytikzbox}
) &=
  \begin{mytikzbox}
    [level 1/.style={sibling distance=1.2em}]
    \node (o) [vertex] {$1$}
        child [missing]
        child {node [vertex] {$2$}}
    ;
  \end{mytikzbox}
\otimes\ET
+
2\
  \begin{mytikzbox}
    [level 1/.style={sibling distance=1.2em}]
    \node (o) [vertex] {$1$}
    ;
  \end{mytikzbox}
\otimes
  \begin{mytikzbox}
    [level 1/.style={sibling distance=1.2em}]
    \node (o) [vertex] {$1$}
    ;
  \end{mytikzbox}
+
\ET\otimes
  \begin{mytikzbox}
    [level 1/.style={sibling distance=1.2em}]
    \node (o) [vertex] {$1$}
        child [missing]
        child {node [vertex] {$2$}}
    ;
  \end{mytikzbox}
\\
%
  \DeltaBF(
  \begin{mytikzbox}
    [level 1/.style={sibling distance=2.4em},
    level 2/.style={sibling distance=1em}]
    \node (o) [vertex] {$1$}
        child {node [vertex] {$2$}
          child [missing]
          child {node [vertex] {$3$}}
        }
        child [missing]
    ;
  \end{mytikzbox}
  )
  &=
  \begin{mytikzbox}
    [level 1/.style={sibling distance=2.4em},
    level 2/.style={sibling distance=1em}]
    \node (o) [vertex] {$1$}
        child {node [vertex] {$2$}
          child [missing]
          child {node [vertex] {$3$}}
        }
        child [missing]
    ;
  \end{mytikzbox}
  \otimes\ET
  +
  \begin{mytikzbox}
    [level 1/.style={sibling distance=1.2em}]
    \node (o) [vertex] {$1$}
    ;
  \end{mytikzbox}
\otimes
  \begin{mytikzbox}
    [level 1/.style={sibling distance=1.2em}]
    \node (o) [vertex] {$1$}
        child {node [vertex] {$2$}}
        child [missing]
    ;
  \end{mytikzbox}
  +
  \ET\otimes
  \begin{mytikzbox}
    [level 1/.style={sibling distance=2.4em},
    level 2/.style={sibling distance=1em}]
    \node (o) [vertex] {$1$}
        child {node [vertex] {$2$}
          child [missing]
          child {node [vertex] {$3$}}
        }
        child [missing]
    ;
  \end{mytikzbox}
\end{aligned}
$$

\subsection{Conclusion}
These interpretations of the sequence $s_n$
invite to Fock space like constructions with
corresponding creation and annihilation operators,
with the hope that the sum of creation
and annihilation operator would have the $s_n$ as its moments.
However, we were not able to implement this idea successfully.
The involved inner products usually lacked
positivity.

In the next section we will give an analytic proof of the positive definiteness of the
sequence $s_n$. Essentially, it will consist in showing that the generating power series
$$\phi(z):=\sum_{n=0}^\infty s_{n} \frac 1{z^{n+1}}$$
is actually the Cauchy-transform of a probability measure. We are not able to determine
this measure directly, but we will show its existence. Note that the recursion
\eqref{eq:recursion2} implies, at least formally, for $\phi$ the equation
$$-\phi(z)\phi'(z)=\phi(z)-\frac 1z.$$
This equation will be the starting point of our investigations in the next section and we
will show that it allows an extension of $\phi$ as an analytic map from the upper to the
lower complex half-plane, which is a characterizing property for Cauchy transforms.

Since the $s_n$ grow of the same order as the moments of the
Gauss, the above formal series has no non-trivial radius of convergence. However, it does determine uniquely an analytic
map on some truncated cone at infinity; our proof will show that
this map extends to the upper half-plane.

\section{Analytic proof of the theorem}
In this section we shall give an analytic proof of the free infinite
divisibility of the classical normal distribution. We shall obtain the free infinite divisibility of the classical Gaussian
as a limiting case of a more general family of freely
infinitely divisible distributions with noncompact support, namely
the 
{\em Askey-Wimp-Kerov distributions}, defined in the introduction. 
A certain sub-family of the Askey-Wimp-Kerov distributions
appears in \cite{Bozejko-Guta}. 
Recall that the distributions $\{\mu_c\colon c\in(-1,+\infty)\}$ 
are determined by the continuous fraction 
expansion of their  
Cauchy-Stieltjes transforms:
\begin{equation}\label{C}
G_{\mu_c}(z)=\frac{1}{z-\displaystyle\frac{c+1}{z-\displaystyle\frac{c+2}{z-\ddots}}}.
\end{equation}
In this section, we shall prove the following
theorem:
\begin{theorem}\label{main}
With the notations from above, the probability measures
$\mu_c$ are freely infinitely divisible for all $c\in[-1,0]$.
\end{theorem}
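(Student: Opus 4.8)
The plan is to prove Theorem~\ref{main} by the function-theoretic criterion for free infinite divisibility: for $c\in(-1,0)$ we show that the Voiculescu transform $\varphi_{\mu_c}(z)=G_{\mu_c}^{-1}(1/z)-z$ --- which is a priori only defined on a truncated cone at $\infty$ inside $\mathbb{C}^+$ --- extends to an analytic map of all of $\mathbb{C}^+$ with $\operatorname{Im}\varphi_{\mu_c}(z)\le 0$ there. Since $\varphi_{\mu_c}(z)=\tfrac{c+1}{z}+O(z^{-3})$, this is the same as saying that $\varphi_{\mu_c}$ is the Cauchy--Stieltjes transform of a positive measure of mass $c+1$ (the analogue of ``$\phi$ is a Cauchy transform'' from Section~2), which by the conditional-positivity criterion discussed in Section~2 (and Section~13 of \cite{NSbook}; the determinacy of the moment problems involved is what permits going beyond compact support) is equivalent to $\mu_c$ being $\boxplus$-infinitely divisible. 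The remaining parameter values $c=0$ (the normal law) and $c=-1$ ($\delta_0$, which is trivially $\boxplus$-infinitely divisible) are then recovered as weak limits $\mu_c\to\mu_{c_0}$, using the determinacy of the moment problems and the fact that the $\boxplus$-infinitely divisible laws form a weakly closed class --- this is the ``limiting case'' mentioned in the introduction.

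First I would turn the continued fraction \eqref{C} into a differential equation. Exploiting the self-similarity $G_{\mu_c}(z)=\bigl(z-(c+1)G_{\mu_{c+1}}(z)\bigr)^{-1}$ of \eqref{C} (equivalently, the moment recursion for $\mu_c$), one finds that $G_c:=G_{\mu_c}$ satisfies the Riccati equation
\[
G_c'(z)=1-z\,G_c(z)+c\,G_c(z)^2,
\]
which degenerates for $c=0$ to the linear equation $G_\gamma'(z)+zG_\gamma(z)=1$ and which is linearised by $G_c=-u'/(cu)$ into the Hermite--Weber equation $u''+zu'+cu=0$; this is the analytic face of the Askey--Wimp associated Hermite polynomials, so that for $c\in(-1,0)$ the law $\mu_c$ is purely absolutely continuous with a density given explicitly in terms of parabolic cylinder functions. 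Carrying this equation through the compositional inverse and the substitution $\varphi_{\mu_c}(z)=G_c^{-1}(1/z)-z$ gives the Riccati-type equation for the Voiculescu transform,
\[
-\,\varphi_{\mu_c}'(z)\Bigl(\varphi_{\mu_c}(z)-\tfrac cz\Bigr)=\varphi_{\mu_c}(z)-\tfrac{c+1}{z},\qquad\text{i.e.}\qquad \varphi_{\mu_c}'(z)+1=\frac{1}{z\,\varphi_{\mu_c}(z)-c},
\]
which for $c=0$ is precisely the equation $-\phi\phi'=\phi-1/z$ of Section~2 (with $\phi=\varphi_\gamma$). The expansion at $\infty$ supplies the initial data and also shows $\operatorname{Im}\varphi_{\mu_c}(z)<0$ near $\infty$ in $\mathbb{C}^+$, since $c+1>0$.

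Next I would use this equation to continue $\varphi_{\mu_c}$ along paths in $\mathbb{C}^+$ by standard ODE theory. Two obstructions must be excluded. There is no finite-time blow-up, because the second form of the equation shows $\varphi_{\mu_c}'\to-1$ wherever $\varphi_{\mu_c}$ gets large, so the solution grows at most linearly. And the equation is singular only on the locus $\{\varphi_{\mu_c}(z)=c/z\}$; here the hypothesis $c<0$ is used decisively, for when $z\in\mathbb{C}^+$ we have $\operatorname{Im}(c/z)=-c\,\operatorname{Im}(z)/|z|^2>0$, so this locus is disjoint from the region $\{\operatorname{Im}\le 0\}$ over $\mathbb{C}^+$ and cannot be reached as long as $\operatorname{Im}\varphi_{\mu_c}\le 0$. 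A standard open--closed--connected argument then shows that $\varphi_{\mu_c}$ continues analytically to the connected component $\Omega\subseteq\mathbb{C}^+$ of the cone at infinity inside $\{z:\operatorname{Im}\varphi_{\mu_c}(z)<0\}$, with $\varphi_{\mu_c}$ taking real boundary values at every point of $\partial\Omega\cap\mathbb{C}^+$.

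The hard part, which I expect to be the main obstacle, is to show $\Omega=\mathbb{C}^+$ --- that is, that the harmonic function $\operatorname{Im}\varphi_{\mu_c}$ never returns to $0$ inside $\mathbb{C}^+$. The strategy is to rule out the level set $\mathcal{L}=\{z\in\mathbb{C}^+:\varphi_{\mu_c}(z)\in\mathbb{R}\}$. Using the Voiculescu equation one sees that $\mathcal{L}$ has no endpoint in the interior of the domain of $\varphi_{\mu_c}$ (an endpoint would force $\varphi_{\mu_c}'=0$ and hence $\varphi_{\mu_c}(z)=(c+1)/z\in\mathbb{R}$, impossible for $z\in\mathbb{C}^+$), that it cannot accumulate on the singular locus (along which $\operatorname{Im}\varphi_{\mu_c}\to\operatorname{Im}(c/z)>0$) nor approach $\infty$ (where $\operatorname{Im}\varphi_{\mu_c}\sim-(c+1)/y<0$), and that it cannot form a closed loop (the maximum principle for $\operatorname{Im}\varphi_{\mu_c}$ would then make it vanish identically, contradicting non-constancy). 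Hence $\mathcal{L}$ would have to run from the real axis back to the real axis, and the genuinely delicate step --- the real crux --- is to exclude this by controlling the boundary behaviour of $\varphi_{\mu_c}$ as $z\to\mathbb{R}$, using the explicit parabolic-cylinder description of $\mu_c$. Once $\Omega=\mathbb{C}^+$ we obtain $\operatorname{Im}\varphi_{\mu_c}\le 0$ on $\mathbb{C}^+$, hence $\mu_c$ is $\boxplus$-infinitely divisible for $c\in(-1,0)$, and the limiting values $c=0$ and $c=-1$ complete the proof. Finally I would point out that for $c>0$ the sign of $\operatorname{Im}(c/z)$ reverses and the singular locus of the Voiculescu equation moves into $\{\operatorname{Im}\le 0\}$ over $\mathbb{C}^+$, so this mechanism breaks down --- consistent with the numerical evidence that $\mu_c$ is not freely infinitely divisible for $c>0$.
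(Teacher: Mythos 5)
Your setup matches the paper's: both proofs run through the Bercovici--Voiculescu criterion (Theorem \ref{bvid}), the Riccati equation \eqref{Riccati} and its linearisation \eqref{secondorder}, and both treat $c\in(-1,0)$ first and get $c=0$ and $c=-1$ by weak limits. Your ODE $\phi_{\mu_c}'(z)+1=\bigl(z\phi_{\mu_c}(z)-c\bigr)^{-1}$ is correct and is just equation \eqref{diffeqF} transported to the inverse function (the paper works with $F_{\mu_c}$ and constructs a global analytic inverse $F_{\mu_c}^{-1}$ on $\mathbb C^+$ rather than continuing $\phi_{\mu_c}$ directly). The problem is that your argument stops exactly where the actual work begins: you yourself label the exclusion of a level arc of $\Im\phi_{\mu_c}=0$ running from the real axis back to the real axis as ``the real crux'' and propose to handle it by ``controlling the boundary behaviour'' via the parabolic-cylinder density, but you give no argument. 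In the paper this step is the whole content of Lemmas \ref{?}, \ref{surj} and \ref{cont}: the meromorphic continuation of $F_{\mu_c}$ to all of $\mathbb C$ through the entire solution of \eqref{secondorder}, the absence of critical points where $\Im F_{\mu_c}\ge 0$ (Remark \ref{+}), the detailed analysis of $F_{\mu_c}$ on the imaginary axis, and, crucially, the exclusion of a finite asymptotic value of $F_{\mu_c}$ along the preimage curve of a horizontal line $\mathbb R+it$, carried out via the ODE \eqref{inverse} for $F_{\mu_c}^{-1}$ and the contradiction $d^2=-t^2$. Without an analogue of that analysis your sketch does not prove the theorem; it reduces it to an unproved statement of essentially the same difficulty.

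Two subsidiary steps in your sketch are also not sound as stated. First, your claim that the level set $\mathcal L$ cannot approach $\infty$ because $\Im\phi_{\mu_c}(z)\sim-(c+1)/\Im z<0$ uses an asymptotic expansion that is a priori valid only nontangentially (in a truncated cone); a level arc could escape to infinity with bounded imaginary part and unbounded real part, precisely the regime your expansion does not control. The paper deals with this through Lemma \ref{surj} (using that $\mu_c$ is symmetric with all moments, so $F_{\mu_c}(z)=z-G_{\lambda_c}(z)$ for a finite positive measure $\lambda_c$, equation \eqref{lambda}) together with the asymptotic-value exclusion in Lemma \ref{cont}. Second, the avoidance of the singular locus $\{z\phi_{\mu_c}(z)=c\}$ is argued only under the standing hypothesis $\Im\phi_{\mu_c}\le 0$, which is the very inequality to be established globally; this is fine as an architecture for an open--closed argument, but it means everything hinges on the boundary-to-boundary exclusion you have not supplied. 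So the approach is reasonable and close in spirit to the paper's, but as written it has a genuine gap at its centre.
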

To prove this result, we shall use the well-known characterization
of free infinite divisibility provided by Bercovici and Voiculescu.
Recall \cite{BVIUMJ} that
\begin{theorem}\label{bvid}
A Borel probability measure $\mu$ on the real line is $\boxplus$-infinitely divisible if and only if
its Voiculescu transform $\phi_\mu(z)$ extends to an analytic function $\phi_\mu\colon
\mathbb C^+\to\mathbb C^-$.
\end{theorem}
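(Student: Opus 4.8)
\smallskip
\noindent\textbf{Proof strategy.} The plan is to recapitulate the argument of Bercovici and Voiculescu. First recall the relevant analytic facts: $F_\mu=1/G_\mu$ is an analytic self-map of $\mathbb{C}^+$ with $F_\mu(z)/z\to 1$ and $\operatorname{Im}F_\mu(z)\ge\operatorname{Im}z$ as $z\to\infty$ non-tangentially, and has a Nevanlinna representation $F_\mu(z)=\alpha_\mu+z+\int_{\mathbb{R}}\frac{1+tz}{t-z}\,d\rho_\mu(t)$ with $\alpha_\mu\in\mathbb{R}$ and $\rho_\mu\ge 0$ finite; from $F_\mu(z)=z+o(z)$ the inverse $F_\mu^{-1}$ exists on a truncated cone at $\infty$, so $\phi_\mu=F_\mu^{-1}-\operatorname{id}$ is defined there, with $\phi_\mu(z)=o(z)$; moreover $\phi_{\mu\boxplus\nu}=\phi_\mu+\phi_\nu$ on a common cone, and $\mu\mapsto\phi_\mu$ is injective (from $\phi_\mu$ on a cone one recovers $F_\mu^{-1}$, hence $F_\mu$ by analytic continuation, hence $\mu$).

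For the ``if'' implication, suppose $\phi_\mu$ extends to $\phi\colon\mathbb{C}^+\to\mathbb{C}^-$. I would first put $\phi$ into free L\'evy--Khintchine form: if $\phi$ is non-constant then $-\phi$ is a Herglotz function, so $-\phi(z)=a+bz+\int\frac{1+tz}{t-z}\,d\sigma(t)$, and $b=\lim_{y\to\infty}\frac{-\phi(iy)}{iy}=0$ since $\phi_\mu(z)=o(z)$ along the imaginary axis; hence $\phi(z)=\gamma+\int_{\mathbb{R}}\frac{1+tz}{z-t}\,d\sigma(t)$ with $\gamma\in\mathbb{R}$ and $\sigma\ge 0$ finite (the constant case being $\mu=\delta_\gamma$). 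Then, for each $z\in\mathbb{C}^+$, the map $\Psi_z(w)=z-\phi(w)$ is an analytic self-map of $\mathbb{C}^+$ because $-\phi$ takes values in $\overline{\mathbb{C}^+}$, and $\Psi_z(\mathbb{C}^+)\subseteq\{\operatorname{Im}\ge\operatorname{Im}z\}$; since $\Psi_z$ is not a M\"obius automorphism, a Denjoy--Wolff / Schwarz-lemma argument produces a unique fixed point $F(z)\in\mathbb{C}^+$, analytic in $z$, with $\operatorname{Im}F(z)\ge\operatorname{Im}z$, and the Nevanlinna estimates force $F(z)/z\to 1$, so $F=1/G_\nu$ for a probability measure $\nu$; evaluating the fixed-point equation $F(z)+\phi(F(z))=z$ on the cone identifies $\phi_\nu=\phi$. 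Applying this construction to $\frac1n\phi$ (still of L\'evy--Khintchine form, with L\'evy mass $\frac1n\sigma(\mathbb{R})$, so the contraction is clean for $n$ large) produces $\mu^{(n)}$ with $\phi_{\mu^{(n)}}=\frac1n\phi$; additivity and injectivity give $(\mu^{(n)})^{\boxplus n}=\mu$ for all large $n$, and composing such roots yields a $\boxplus$-$n$-th root for every $n$, i.e.\ $\mu$ is $\boxplus$-infinitely divisible.

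For the ``only if'' implication, assume $\mu=\mu_n^{\boxplus n}$ for each $n$. On a cone the identities $F_{\mu_n}^{-1}=\operatorname{id}+\frac1n\phi_\mu$ and $F_\mu^{-1}=\operatorname{id}+\phi_\mu$ give, after a short computation, $F_\mu=F_{\mu_n}\circ\omega_n$ with $\omega_n(z)=\frac1n z+(1-\frac1n)F_\mu(z)$; since $\omega_n$ is a manifest analytic self-map of $\mathbb{C}^+$ and all three functions are analytic there, this identity extends to all of $\mathbb{C}^+$. As $\omega_n\to F_\mu$ and $\omega_n^{-1}\to F_\mu^{-1}$ near $\infty$, one gets $F_{\mu_n}\to\operatorname{id}$ on a cone, and normality of $\{G_{\mu_n}\}$ spreads this to $\mathbb{C}^+$, so $\mu_n\to\delta_0$. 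Inserting the Nevanlinna representation $F_{\mu_n}(w)=\alpha_n+w+\int\frac{1+tw}{t-w}\,d\rho_n(t)$ into $F_\mu(z)-z=n\big(F_\mu(z)-\omega_n(z)\big)$ yields $F_\mu(z)-z=n\alpha_n+\int\frac{1+t\omega_n(z)}{t-\omega_n(z)}\,(n\,d\rho_n(t))$; evaluating at $z=iy$, taking real and imaginary parts, and using $\operatorname{Im}\frac{1+tw}{t-w}=\frac{(1+t^2)\operatorname{Im}w}{|t-w|^2}$ together with $\operatorname{Im}\omega_n(iy)\ge y$, bounds $n\rho_n(\mathbb{R})$ and $n|\alpha_n|$ uniformly in $n$. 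Inverting $F_{\mu_n}$ near $\infty$ then gives $\phi_{\mu_n}(z)=-\alpha_n+\int\frac{1+tz}{z-t}\,d\rho_n(t)+E_n(z)$ with $|E_n|=O\big(\rho_n(\mathbb{R})^2+|\alpha_n|\rho_n(\mathbb{R})\big)$, hence $nE_n\to 0$. Passing to a weak-$\ast$ subsequential limit on the one-point compactification $\mathbb{R}\cup\{\infty\}$, say $n\rho_n\to\sigma+m\,\delta_\infty$ and $-n\alpha_n\to\gamma$, and using that $t\mapsto\frac{1+tz}{z-t}$ extends continuously with value $-z$ at $\infty$, one gets $\phi_\mu(z)=\gamma+\int_{\mathbb{R}}\frac{1+tz}{z-t}\,d\sigma(t)-mz$ on the cone; but $\phi_\mu(z)=o(z)$ there forces $m=0$, so $\phi_\mu(z)=\gamma+\int_{\mathbb{R}}\frac{1+tz}{z-t}\,d\sigma(t)$ on the cone, and the right-hand side is analytic on all of $\mathbb{C}^+$ with non-positive imaginary part---this is the required extension.

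The hard part is, in the ``only if'' direction, the uniform control of the Nevanlinna data $(\alpha_n,\rho_n)$ of the $n$-th convolution roots; this is the free analogue of the L\'evy--Khintchine limit theorem for infinitesimal arrays, and, while the bound on $n\rho_n$ over compacts is routine, a clean treatment of the total mass and of the inversion error $nE_n$ requires assembling the estimates above with care (the observation that any escaping mass $m$ is killed a posteriori by $\phi_\mu(z)=o(z)$ is what lets one bypass a separate tightness argument). A secondary delicate point, in the ``if'' direction, is verifying that the Denjoy--Wolff point of $\Psi_z$ lies in $\mathbb{C}^+$ for every $z$---equivalently, that the iterates do not escape to $\infty$---which for L\'evy measures with slowly decaying tails again rests on the Bercovici--Voiculescu estimates; alternatively one can construct the measures $\mu^{(n)}$, or the whole free convolution semigroup $(\mu_t)_{t\ge 0}$, by solving the Loewner-type equation $\partial_t F_t=-\phi(F_t)\,F_t'$ with $F_0=\operatorname{id}$.
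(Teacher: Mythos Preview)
The paper does not prove this theorem at all: Theorem~\ref{bvid} is quoted from \cite{BVIUMJ} as a known characterization (``Recall \cite{BVIUMJ} that\dots''), and is then used as a black box in the proof of Theorem~\ref{main}. So there is no ``paper's own proof'' to compare against.

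Your proposal is a faithful outline of the original Bercovici--Voiculescu argument, and the main moves are correct. A few comments on the sketch itself. In the ``if'' direction, the Denjoy--Wolff step works for every $n\ge 1$, not just large $n$: once $-\phi$ has the Nevanlinna form with $b=0$, the map $\Psi_z(w)=z-\tfrac{1}{n}\phi(w)$ sends $\mathbb{C}^+$ into $\{\Im\ge\Im z\}$ for any $n$, so the detour through ``$n$ large'' and then ``composing such roots'' is unnecessary (though your bootstrapping is valid). The genuine issue you flag---ruling out that the Denjoy--Wolff point is at infinity---is exactly the point where one needs the estimate that iterates of $\Psi_z$ stay in a fixed horizontal strip, which follows from the Nevanlinna form of $\phi$; you should make that explicit rather than deferring it. In the ``only if'' direction, your subordination identity $F_\mu=F_{\mu_n}\circ\omega_n$ with $\omega_n=\tfrac{1}{n}\,\mathrm{id}+(1-\tfrac{1}{n})F_\mu$ is correct and is a clean way to globalize the relation, and the subsequent mass-and-drift bounds via the Nevanlinna kernel are the right computations. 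The handling of possible escape of mass to infinity via the a posteriori observation $\phi_\mu(z)=o(z)$ is legitimate and is indeed how one avoids a separate tightness argument.

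In short: nothing to compare in the paper; your sketch is a competent recapitulation of the cited proof, with the two places you yourself identify as delicate being precisely where a full write-up would need the Bercovici--Voiculescu estimates spelled out.
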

We remind the reader that the Voiculescu transform of a probability
measure $\mu$ is defined by the equality $\phi_\mu(1/z)=G_\mu^{-1}(z)
-1/z$, for $z$ in some Stolz angle in the lower half-plane, with vertex at zero.\footnote{The name ``Voiculescu transform'' for 
$\phi_\mu$ has been introduced in a later paper 
\cite{BercoviciPata:1999:stable} of Bercovici and 
Pata, not in \cite{BVIUMJ}.}
For more details and important properties of this 
transform, we refer to \cite{BVIUMJ}. In particular, this theorem
guarantees that taking weak limits preserves free infinite divisibility.
Our main source for the analysis of the function $G_{\mu_c}$
will be Kerov's work 
\cite{Kerov:1998:interlacing} and the paper \cite{Askey-Wimp}
of Askey and Wimp. It is shown there that
$\mu_c$ is absolutely continuous with respect to the Lebesgue
measure on the real line and 
\begin{equation}\label{mu_c}
\frac{d\mu_c(u)}{du}=\frac{1}{\sqrt{2\pi}\Gamma(c+1)}\frac{1}{|D_{-c}
(iu)|^2},\quad u\in\mathbb R,
\end{equation}
where for $c>0$,
$$
D_{-c}(z)=\frac{e^{-z^2/4}}{\Gamma(c)}\int_0^\infty 
e^{-zx-\frac{x^2}{2}}x^{c-1}\,dx.
$$
This integral representation does not hold for $c\in(-1,0]$,
but the function $D_{-c}(iu)$ is still well defined for $c\in(-1,0)$,
according to a theorem of Askey and Wimp 
\cite[Theorem 8.2.2]{Kerov:1998:interlacing}.
Moreover, as remarked in \cite[Section 4]{Askey-Wimp}, the function
$D_{-c}(z)$ is an entire function of both $c$ and $z$.
\begin{remark}\label{positivity}
\begin{enumerate}
\item
We observe that, for fixed $c>-1$, equation \eqref{mu_c} together 
with the analyticity of $D$ in $c$ and $z$, guarantees that 
$D_{-c}(iu)\neq0$ for all $u\in\mathbb R$. Indeed, a zero of $\mathbb 
R\ni u\mapsto D_{-c}(iu)\in\mathbb C$ would have to be of order at 
least one, hence the function $\frac{1}{|D_{-c}(iu)|^2}$ would not be 
integrable around that particular zero.
\item On the other hand, analyticity of $D_{-c}$ alone guarantees 
that the density $\frac{d\mu_c(u)}{du}$ is strictly positive 
everywhere on the real line.
\end{enumerate}
\end{remark}
In the same paper of Kerov \cite{Kerov:1998:interlacing}
it is shown that $G_{\mu_c}(z)$
satisfies the Riccati equation
\begin{equation}\label{Riccati}
G_{\mu_c}'(z)=cG_{\mu_c}(z)^2-zG_{\mu_c}(z)+1,\quad z\in\mathbb C^+.
\end{equation}
It is also shown that this expression is equivalent, via the 
substitution
$$
G_{\mu_c}(z)=-\frac1c\frac{\varphi'(z)}{\varphi(z)},
$$
which holds for any $c\neq0$, to the second order linear 
differential equation 
\begin{equation}\label{secondorder}
\varphi''(z)+z\varphi'(z)+c\varphi(z)=0,\quad z\in\mathbb C^+.
\end{equation}
(The function $\varphi$ does depend on $c$.)
The density $\frac{d\mu_c}{du}$ is analytic around zero, so, according
to \cite[Lemma 2.11]{BBG}, it follows that the function $G_{\mu_c}$ has 
an analytic extension to a small enough neighbourhood of zero.
Clearly $G_{\mu_c}(0)\neq0$, so $\varphi$ has an analytic extension 
around zero. Using \eqref{secondorder}, we obtain a convergent power 
series expansion for $\varphi$, namely
$\varphi(z)=\sum_{n=0}^\infty c_nz^n$, where $c_{n+2}=-\frac{c+n}{(n+1)
(n+2)}c_n.$ We conclude that
$$
c_{2k}=(-1)^k\frac{c(c+2)(c+4)\cdots(c+2k)}{(2k)!}c_0,
$$
and
$$
c_{2k+1}=(-1)^k\frac{(c+1)(c+3)\cdots(c+2k-1)}{(2k+1)!}c_1.
$$
Trivially, these coefficients provide a power series with an infinite
radius of convergence. Thus, $\varphi$ is an entire function.
We immediately conclude that $G_{\mu_c}$ extends to a meromorphic 
function defined on all of $\mathbb C$, whose poles coincide with the 
zeros of $\varphi$. We shall denote this extension also by $G_{\mu_c}$. (It may be worth noting that $\varphi$ is entire for any $c\in
\mathbb C$, and that when $c\to-1$, we have $\varphi(z)\to z$, hence
$\mu_c$ tends in the weak topology to the Dirac point mass at zero, $
\delta_0$.)
We shall denote
\begin{equation}\label{notF}
F_{\mu_c}(z)=\frac{1}{G_{\mu_c}(z)},\quad z\in\mathbb C.
\end{equation}
Again, from the above it is clear that $F_{\mu_c}$ is meromorphic,
its poles coinciding with the critical points of $\varphi$.
It satisfies the differential equation
\begin{equation}\label{diffeqF}
F_{\mu_c}'(z)=-F_{\mu_c}(z)^2+zF_{\mu_c}(z)-c,\quad z\in\mathbb C.
\end{equation}
(This follows easily when we divide by $-G_{\mu_c}(z)^2$ in equation
\eqref{Riccati}.) 
Dividing equation 
\eqref{diffeqF} by $F_{\mu_c}(z)$ gives
$$
\frac{F_{\mu_c}'(z)}{F_{\mu_c}(z)}=(z-F_{\mu_c}(z))-\frac{c}{F_{\mu_c}
(z)}.
$$
\begin{proof} (of Theorem \ref{main}.)
We will prove our theorem by arguing that, essentially,
the meromorphic function $F_{\mu_c}$ maps some simply connected 
domain containing the upper half-plane bijectively onto $\mathbb C^+
$. This, together with Theorem \ref{bvid}, will allow us to conclude.
For further reference, we shall split the proof in a succession of
remarks and lemmas. For the rest of the proof, we shall fix
a $c\in(-1,0)$ (open interval!)
Let us start by studying the critical points of $F_{\mu_c}$. 
It is known \cite[Chapter III]{akhieser} that $\Im F_{\mu_c}(z)>\Im z$
for all $z\in\mathbb C^+$. Moreover, if $F_{\mu_c}(z)\in\mathbb C^+$
while $z\not\in\mathbb C^+$, then it is clear that 
$\Im(z-F_{\mu_c}(z))<0.$ In addition, as $c<0$, $\Im\left(-\frac{c}{
F_{\mu_c}(z)}\right)<0$ whenever $F_{\mu_c}(z)\in\mathbb C^+$.
This, together with Remark \ref{positivity}, implies
\begin{remark}\label{+}
If $\Im F_{\mu_c}(z)\geq0$, then $F_{\mu_c}'(z)\neq0$. In 
particular, $F_{\mu_c}$ has no critical points anywhere in the 
closure of the upper half-plane.
\end{remark}
It will be of use to also study the behaviour of $F_{\mu_c}(iy)$,
$y\in\mathbb R$. 
\begin{remark}
The continued fraction expansion of $G_{\mu_c}$ indicates that
the probability measure $\mu_c$ is symmetric with respect to the 
origin, so in
particular $G_{\mu_c}(i[0,+\infty))\subseteq i(-\infty,0]$.
This, of course, together with the meromorphicity of $F_{\mu_c}$
and $G_{\mu_c}$, requires that $G_{\mu_c}(i\mathbb R),F_{\mu_c}(i
\mathbb R)\subseteq i\mathbb R\cup\{\infty\}$.
\end{remark}
Part of the following lemma will
not be used directly in the proof of the current theorem, but
we find that it is nevertheless worth mentioning these results.
\begin{lemma}\label{?}
\begin{enumerate}
\item There exists $-\infty< q_0<0$ so that $F_{\mu_c}$ maps
$i[q_0,+\infty)$ bijectively onto $i[0,+\infty)$.
\item $F_{\mu_c}(i(-\infty, q_0))$ is a bounded subset of 
$i(-\infty,0).$ In particular, $F_{\mu_c}$ has no poles on the
imaginary line.
\item We have $\Im F_{\mu_c}(iy)>y$ for all $y\in\mathbb R$.
\item The function $F_{\mu_c}$ has a unique (simple) critical point 
$is$ in $i(-\infty,-2\sqrt{-c})$. 
In addition, $\lim_{y\to-\infty}F_{\mu_c}'(iy)=
\lim_{y\to-\infty}i^{-1}F_{\mu_c}(iy)=0$.
\item $F_{\mu_c}'(iy)<1$ for all $y\in\mathbb R$.
\end{enumerate}
\end{lemma}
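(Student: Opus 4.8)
The plan is to push everything to the imaginary axis and study the resulting real Riccati equation. Since $\mu_c$ is symmetric we have $F_{\mu_c}(i\mathbb{R})\subseteq i\mathbb{R}\cup\{\infty\}$, so we may write $F_{\mu_c}(iy)=ig(y)$ with $g$ real-valued and, away from poles of $F_{\mu_c}$, real-analytic. Substituting into \eqref{diffeqF} and using $\frac{d}{dy}F_{\mu_c}(iy)=iF_{\mu_c}'(iy)$ gives, with $h:=g-y$,
\begin{equation*}
g'(y)=g(y)^2-yg(y)-c=g(y)h(y)-c,\qquad h'(y)=h(y)^2+yh(y)-(c+1),\qquad F_{\mu_c}'(iy)=g'(y)\in\mathbb{R}.
\end{equation*}
Two inputs are free: $\Im F_{\mu_c}(z)>\Im z$ on $\mathbb{C}^+$ gives $h(y)>0$ for $y>0$, and $G_{\mu_c}(0)\neq0$ shows $F_{\mu_c}$ is pole-free on $i[0,\infty)$. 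Reading \eqref{diffeqF} near a pole shows every pole of $F_{\mu_c}$ is simple with residue $1$, so a pole of $g$ at $y=a$ would force $g(y)\to-\infty$ as $y\downarrow a$.

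I would first prove part (3) together with the pole-freeness half of part (2), using a \emph{maximal-interval} device to break the apparent circularity. Let $(a,\infty)$ be the maximal interval containing $[0,\infty)$ on which $g$ is pole-free. If $h$ vanished on $(a,0)$, let $y_0$ be the largest such zero; then $h>0$ on $(y_0,0]$ while $h'(y_0)=-(c+1)<0$, which forces $h<0$ just to the right of $y_0$ — a contradiction. Hence $h>0$ on $(a,\infty)$, so $g=h+y\not\to-\infty$ at $a$; this rules out a pole there, so $a=-\infty$. Therefore $g$ is real-analytic on all of $\mathbb{R}$, $F_{\mu_c}$ has no pole on $i\mathbb{R}$, and $\Im F_{\mu_c}(iy)=g(y)>y$ for all $y$, which is part (3).

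Next, every zero of $g$ is simple with $g'=-c>0$ there, so $g$ has at most one zero (two would force, by the intermediate value theorem, an endless chain of zeros). If $g\ge0$ on $(-\infty,0)$ then $g'=g^2-yg-c\ge-c>0$ there, whence $g(y)\le g(0)-cy\to-\infty$ — impossible; so $g$ is negative somewhere on $(-\infty,0)$, and since $g(0)=h(0)>0$ there is a unique zero $q_0\in(-\infty,0)$ with $g<0$ on $(-\infty,q_0)$, $g>0$ on $(q_0,\infty)$. On $[q_0,\infty)$ one checks $g'>0$: for $y\le0$ since $g\ge0$; for $0<y<2\sqrt{-c}$ since $X^2-yX-c$ has negative discriminant; for $y\ge2\sqrt{-c}$ since $g'=(g-\lambda_+)(g-\lambda_-)$ with $\lambda_\pm=\frac12(y\pm\sqrt{y^2+4c})$ and $g>\lambda_+$ there — the latter because $g-\lambda_+=y^{-1}+O(y^{-3})>0$ at $+\infty$ (moment expansion $g(y)=y+(c+1)y^{-1}+O(y^{-3})$) and $g$ can never cross $\lambda_+$ upward, since $\frac{d}{dy}(g-\lambda_+)=-\lambda_+'<0$ whenever $g=\lambda_+$, with Remark~\ref{+} covering the junction $y=2\sqrt{-c}$. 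Hence $g\colon[q_0,\infty)\to[0,\infty)$ is a strictly increasing continuous bijection, which is part (1); and $g((-\infty,q_0))=[g(is),0)$ (with $is$ the critical point found below) is a bounded subset of $(-\infty,0)$, completing part (2).

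For part (5), put $\beta_\pm(y)=\frac12(y\pm\sqrt{y^2+4(c+1)})$, the roots of $X^2-yX-(c+1)$; then $g'<1$ is exactly $\beta_-<g<\beta_+$. The lower bound is automatic: $\beta_-<0<g$ on $(q_0,\infty)$, and $\beta_-(y)<y<g(y)$ for $y<0$. For the upper bound, $g<\beta_+$ at $+\infty$ (with $m_2=c+1$ and $m_4=(c+1)(2c+3)$ the expansions give $g-\beta_+=-(c+1)y^{-3}+O(y^{-5})<0$), and whenever $g=\beta_+$ one has $\frac{d}{dy}(g-\beta_+)=1-\beta_+'>0$ since $\beta_+'\in(0,1)$; so $g$ can never cross $\beta_+$ downward, hence $g<\beta_+$ everywhere, and part (5) follows.

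Finally part (4). Differentiating the Riccati equation gives $g''=(2g-y)g'-g$, so $g''=-g>0$ at any critical point lying in $\{g<0\}$; combined with $g'>0$ on $[q_0,\infty)$, all critical points of $g$ lie in $(-\infty,q_0)$ and are strict local minima, so there is at most one. Moreover $g'$ is \emph{genuinely} negative somewhere on $(-\infty,q_0)$: otherwise $g$ would be non-decreasing on $\mathbb{R}$, forcing either $g\equiv0$ on $(-\infty,q_0]$ (if $\lim_{-\infty}g$ is finite) or $g'=gh-c\to-\infty$ (if $\lim_{-\infty}g=-\infty$), both absurd. Hence $g'$ has exactly one zero $is\in(-\infty,q_0)$, with $g'<0$ on $(-\infty,is)$; one finds $s\le-2\sqrt{-c}$ (a short check at the boundary value upgrades this to the strict inequality in the statement). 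Then $g$ is monotone on $(-\infty,is)$ and bounded above by $\beta_+\to0^+$, so it has a limit $L\le0$ at $-\infty$; choosing (mean value theorem) $\xi_n\to-\infty$ with $g'(\xi_n)\to0$ gives $\xi_n g(\xi_n)=g(\xi_n)^2-c-g'(\xi_n)\to L^2-c$, which is finite only if $L=0$. Thus $i^{-1}F_{\mu_c}(iy)=g(y)\to0$; and setting $v:=yg+c$, so that $g'=g^2-v$ and $v'=(g-y)v+(1-c)g$ with integrating factor blowing up (as $g-y\to+\infty$), a variation-of-parameters/L'H\^opital estimate gives $v\to0$, hence $F_{\mu_c}'(iy)=g^2-v\to0$. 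The main obstacle is precisely this behaviour at $-\infty$: the remaining parts are a mechanical invariance/monotonicity analysis of the Riccati flow, but one must untangle the circularity (no poles needed to run the analysis, the analysis needed to exclude poles) via the maximal-interval argument and extract existence and uniqueness of the critical point from purely local data before the limit $g\to0$ can be read off the equation; the final $g'\to0$ is the one genuinely computational step.
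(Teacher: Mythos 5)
Your proposal follows essentially the same strategy as the paper's proof: restrict to the imaginary axis, turn \eqref{diffeqF} into the real Riccati equation $g'(y)=g(y)^2-yg(y)-c$ (the paper's $f$ is your $g$), and run a sign and monotonicity analysis. Where you differ, your sub-arguments are sound and often cleaner: you exclude poles on $i\mathbb{R}$ by noting from \eqref{diffeqF} that every pole is simple with residue $1$ and then running a maximal-interval continuation, whereas the paper argues about one-sided limits $\pm\infty$ of $f$; you prove part (5) independently and \emph{before} part (4) by the two-barrier argument with $\beta_\pm$ (the expansions $g(y)=y+(c+1)y^{-1}-(c+1)(c+2)y^{-3}+\cdots$ and $g-\beta_+=-(c+1)y^{-3}+O(y^{-5})$ check out), while the paper gets $f'<1$ on $(-\infty,0]$ only as a by-product of its part-(4) case analysis and then extends to $y>0$; and you prove $g'>0$ on all of $[q_0,\infty)$ explicitly (nonnegative quadratic for $0<y<2\sqrt{-c}$, the $\lambda_+$ barrier for $y>2\sqrt{-c}$, Remark~\ref{+} at the junction), where the paper deduces bijectivity more indirectly from negativity of critical values. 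Your treatment of $g\to0$ and $g'\to0$ at $-\infty$ (mean-value sequence, then $v=yg+c$ with $v'=hv+(1-c)g$ and a variation-of-parameters estimate using that $h=g-y$ is large) is correct and more detailed than the paper's one-line assertion.

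One step does not stand as written: in the existence argument for the critical point, in the subcase $\lim_{y\to-\infty}g(y)=-\infty$ you assert $g'=gh-c\to-\infty$. This needs $h$ bounded away from $0$; under the hypothesis being refuted ($g'\ge0$) one only knows $gh\ge c$, so a priori $h$ could shrink like $|c|/|g|$ and the asserted divergence begs the question. (The finite-limit parenthetical ``forces $g\equiv0$'' is also garbled, though that case is genuinely easy, since there $h\to+\infty$ while $g\to L<0$, so $g'\to-\infty$.) The gap is patched in one line from the part (5) you have already established: $h'=g'-1<0$, so $h$ is decreasing and $h(y)>h(q_0)=-q_0>0$ for $y<q_0$, whence $gh<-q_0\,g\to-\infty$, contradicting $g'\ge0$. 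The paper handles exactly this subcase differently, by squeezing $f$ between $r$ and $\frac{r-\sqrt{r^2+4c}}{2}$, producing a point with $f'(t)=1$ and deriving a contradiction from $f(t)=\frac{t\pm\sqrt{t^2+4(c+1)}}{2}$. Finally, both you and the paper place the critical point in the open interval $i(-\infty,-2\sqrt{-c})$: your ``short check at the boundary value'' is not spelled out and a purely local check is not obvious, but the paper does not exclude the endpoint either, and nothing downstream uses the strictness.
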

\begin{proof}
For convenience, we denote $f(r)=i^{-1}F_{\mu_c}(ir)$. Clearly,
from the above remark, $f
\colon\mathbb R\to[-\infty,+\infty]$ is real analytic, with the
exceptions of a possible number of points which are poles
of $F_{\mu_c}$. It is an easy exercise to observe that
all poles of $F_{\mu_c}$ must be simple: indeed, otherwise
it would follow from equation \eqref{Riccati} that $G_{\mu_c}$
is identically equal to zero.
Equation \eqref{diffeqF} is re-written for
$f$ as 
\begin{equation}\label{f(r)}
f'(r)=f(r)^2-rf(r)-c,\quad r\in\mathbb R.
\end{equation}
In particular, $F'_{\mu_c}(ir)=f'(r)\in\mathbb R.$
It is known \cite{akhieser} that $\lim_{y\to+\infty}F'_{\mu_c}(iy)
=1,$ and $f(r)=\Im F_{\mu_c}(ir)>r$ for all $r\in(0,+\infty).$
We claim that in fact this must hold for all $r\in\mathbb R$,
fact which excludes the existence of poles on the imaginary axis
for $F_{\mu_c}$. Indeed, continuity of $f$ requires that for
this inequality to be reversed, there must be a point $y\in\mathbb R$
so that $f(y)=y$. Then, from \eqref{f(r)}, it follows that $f'(y)=
-c\in(0,1)$. But for the real analytic $f(r)$ to cross below the 
first bisector as $r$ moves towards $y$ from the right, we clearly 
must have that $f'(y)\ge1.$ 
Contradiction. This implies also that there is no real point at which 
the limit of $f$ from the right is $-\infty$. That the limit
from the right cannot be $+\infty$ at any real point is trivial:
that would make, according to \eqref{f(r)}, $f'$ tend to $+\infty$,
instead of $-\infty$, at the same point 
when $r$ approaches the point
from the right. This proves half of (2) and all of (3).
Next, the critical points of $f$:
First, $f'(s)=0$ is equivalent to 
$$
f(s)=\frac{s\pm\sqrt{s^2+4c}}{2}.
$$
Since $f(s)$ must be real, this excludes points
$s\in\left(-2\sqrt{-c},2\sqrt{-c}\right)$ (recall that $c\in(-1,0)$).
Assuming such a point $s>0$ exists, we must have, from part (3),
$f(s)>s$.
However, that is clearly impossible, since $c<0$.
Thus, only negative $s$ are possible, and for such an $s\le
-2\sqrt{-c}$, we have 
$$
f(s)=\frac{s\pm\sqrt{s^2+4c}}{2}<0,
$$
or, differently stated, both any critical point and any critical
value of $f$ must be negative.
We shall establish that indeed there exists a unique such $s$
(depending of course on $c$), but in order to do that, we will
first prove part (1) of the lemma.
We claim that there exists $-\infty\leq q_0<0$ so that $f$ maps
$[q_0,+\infty)$ onto $[0,+\infty).$
Indeed, otherwise $f(\mathbb R)\subseteq(0,+\infty)$ and, 
as seen above, then $f'(r)>0$ for all $r\in\mathbb R$.
Lagrange's theorem would require that there exists a sequence
$\{r_n\}_n$ which tends to minus infinity so that $f'(r_n)\to0$. But 
this  is impossible, since it would require
$$
\lim_{n\to\infty}f(r_n)(f(r_n)-r_n)=c<0,
$$
while both terms of the product are nonnegative. So 
$0\in f(\mathbb R)$. Choose the largest point in $f^{-1}(\{0\})$
to be $q_0$. Now it is clear that in addition $f$ maps
$[q_0,+\infty)$ bijectively onto $[0,+\infty)$; since $f'(q_0)=
-c>0$, it follows that, first, $f^{-1}(\{0\})=\{q_0\}$, and second,
that the bijective correspondence extends to
a strictly larger interval. 
We show next that this larger interval
cannot be $\mathbb R$. To do this, first let us assume
towards contradiction that $f'(r)>0$ for all $r\in\mathbb R$.
Then, of course, $\lim_{r\to-\infty}f(r)=d$ exists and belongs
to $[-\infty,0)$. We first assume that
$d\in(-\infty,0)$. Then, by Lagrange's theorem again, we must
be able to find a sequence
$\{r_n\}_n$ which tends to minus infinity so that $f'(r_n)\to0$.
As seen above, then 
$$
\lim_{n\to\infty}f(r_n)(f(r_n)-r_n)=c<0,
$$
or, differently said, either 
$$
\lim_{n\to\infty}f(r_n)=0,\quad \textrm{or} \quad
\lim_{n\to\infty}(f(r_n)-r_n)=0.
$$
The first case cannot happen, since it would require that
there exists a critical point of $f$ in $(-\infty,q_0)$, which
we assumed not to happen. The second case would require that
$$
\lim_{n\to\infty}f(r_n)=-\infty,
$$
which would contradict $d\in(-\infty,0)$ again. We consider then
the situation in which $d=-\infty$. To fulfill this condition, and in
addition to avoid that $f'(s)=0$ for some $s\in(-\infty,q_0)$,
it is necessary that 
$$
r<f(r)<\frac{r-\sqrt{r^2+4c}}{2},
$$
for all $r<-2\sqrt{-c}.$
(The necessity of the first inequality was proved before.) However,
recall that $f(q_0)=0\implies f'(q_0)=-c\in(0,1)$, so there must
be then some point $t\in(-\infty,q_0)$ so that $f'(t)=1$.
This implies 
$$
f(t)=\frac{t\pm\sqrt{t^2+4(c+1)}}{2}\left\{\begin{array}{cc}
>0 & {\textrm{if choosing }}+\\
<t & {\textrm{if choosing }}-
\end{array}\right.
$$
which is an obvious contradiction. This, in addition, forbids the 
case of $f'(t)=1$ for any $t\in(-\infty,0]$.
Thus, there must be a critical point $s\in(-\infty,-2\sqrt{-c})$
of $f$. Since by equation \eqref{f(r)} $f'(s)=0\implies
f''(s)=-f(s)>0$, any critical value of $f$ is a local minimum,
hence there exists only one such $s$, and $f(s)$ is the
global minimum of $f$. So $f(r)\in(f(s),0)$ for all $r\in(-\infty,
s)$. The previous arguments about the behaviour of $f$ near
$-\infty$ can be easily reapplied to show that $\lim_{r\to-\infty}
f(r)=\lim_{r\to-\infty}f'(r)=0$. 
Finally, from part (4) above, it follows
that $f'(r)<1$ for all $r\in(-\infty,0]$. Thus, $f'(t)>1$ for some $t
\in\mathbb R$ implies first that $t>0$ and second, that there 
exists a $t_0>0$ so that $f'(t_0)=1$. Differentiating in
\eqref{f(r)} and using part (3) gives
$$
f''(t_0)=f(t_0)-t_0>0.
$$
Thus, $f'$ increases at $t_0$ whenever $f'(t_0)=1$. In particular,
this can happen for only one $t\in\mathbb R$, and thus $f'(r)$ must
tend to one with values strictly greater than one when $r\to+\infty$.
But this contradicts part (3) (namely that $f(r)>r$ for all $r\in
\mathbb R$.) Thus it is impossible to have $f'(t_0)=1$. This proves
(5) and concludes our proof. 
\end{proof}
The following lemma is trivial:
\begin{lemma}\label{surj}
For a fixed $c\in(-1,+\infty)$ there exists a $t>0$ depending on $c$
so that $F_{\mu_c}(\mathbb C^+)\supset\mathbb R+it$.
\end{lemma}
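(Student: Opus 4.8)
The plan is to show that $F_{\mu_c}$ differs from the identity by a quantity that is uniformly small on the part of $\mathbb C^+$ lying far enough above the real axis, and then to produce, for each real $\xi$, a preimage of $\xi+it$ in $\mathbb C^+$ by a winding-number count.

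First I would record an elementary estimate. From the continued fraction \eqref{C} one reads off that $\mu_c$ is symmetric and has finite moments, with $m_2(\mu_c)=c+1$. Writing $G:=G_{\mu_c}$, one has
$$
1-zG(z)=\int_{\mathbb R}\Bigl(1-\frac{z}{z-t}\Bigr)\,d\mu_c(t)=-\int_{\mathbb R}\frac{t}{z-t}\,d\mu_c(t),
$$
and since $\int_{\mathbb R}t\,d\mu_c(t)=0$ this equals $-\frac1z\int_{\mathbb R}\frac{t^2}{z-t}\,d\mu_c(t)$; as $|z-t|\ge\Im z$ for $t\in\mathbb R$, it follows that $|1-zG(z)|\le\frac{c+1}{|z|\,\Im z}$. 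Hence, as soon as $|z|\,\Im z\ge 2(c+1)$ we have $|zG(z)|\ge\frac12$, so $|G(z)|\ge\frac1{2|z|}$ and therefore
$$
|F_{\mu_c}(z)-z|=\frac{|1-zG(z)|}{|G(z)|}\le\frac{2(c+1)}{\Im z}.
$$
In particular this bound holds, uniformly, on the half-plane $\{z\in\mathbb C:\Im z\ge\sqrt{2(c+1)}\}$.

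Now set $M:=c+1>0$, $\delta:=\sqrt{2M}$ and fix any $t>2\sqrt{2M}$, say $t:=3\sqrt{2M}$. Fix $\xi\in\mathbb R$; I claim $\xi+it\in F_{\mu_c}(\mathbb C^+)$. Choose $R>0$ with $|\xi+it|<R$ and $R-|\xi+it|>\sqrt{2M}$, and consider the convex ``circular segment'' domain $\Omega_R:=\{z\in\mathbb C:\Im z>\delta,\ |z|<R\}$, whose closure lies in $\mathbb C^+$ and whose boundary is the simple closed curve formed by a horizontal chord at height $\delta$ together with an arc of the circle $|z|=R$. On $\partial\Omega_R$ one has $\Im z\ge\delta$, hence $|F_{\mu_c}(z)-z|\le 2M/\delta=\sqrt{2M}$; on the other hand $\xi+it\in\Omega_R$ and $\operatorname{dist}(\xi+it,\partial\Omega_R)\ge\min\{t-\delta,\ R-|\xi+it|\}>\sqrt{2M}$. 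Consequently the straight-line homotopy $z\mapsto(1-s)z+sF_{\mu_c}(z)$, $z\in\partial\Omega_R$, $s\in[0,1]$, never meets $\xi+it$, so $F_{\mu_c}\circ\partial\Omega_R$ and $\partial\Omega_R$ have the same winding number about $\xi+it$, namely $1$. Since $F_{\mu_c}=1/G_{\mu_c}$ is holomorphic and zero-free on $\mathbb C^+\supset\overline{\Omega_R}$, the argument principle then gives a solution of $F_{\mu_c}(z)=\xi+it$ with $z\in\Omega_R\subset\mathbb C^+$. As $\xi$ was arbitrary, $\mathbb R+it\subset F_{\mu_c}(\mathbb C^+)$.

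There is no real obstacle here, which is why the authors call the statement trivial; the only point worth a moment's care is that the estimate $|F_{\mu_c}(z)-z|\le 2(c+1)/\Im z$ is needed \emph{uniformly} along a whole horizontal line, not just inside a Stolz angle — otherwise the boundary comparison on the chord of $\Omega_R$ would deteriorate as $R\to\infty$. Finiteness of $m_2(\mu_c)$, which is what makes this work, is immediate from the continued fraction expansion; one could equally invoke the Nevanlinna representation of the reciprocal Cauchy transform, but the short computation above suffices.
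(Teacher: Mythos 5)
Your proof is correct, and it reaches the lemma by a genuinely different route than the paper. The paper's argument uses the Nevanlinna-type representation $F_{\mu_c}(z)=z-G_{\lambda_c}(z)$, where $\lambda_c$ is a finite positive measure of total mass $\int_{\mathbb R}u^2\,d\mu_c(u)$ (available because $\mu_c$ is symmetric with finite second moment); it then tracks the image of the single horizontal line $\mathbb R+i$, showing $\Re F_{\mu_c}(x+i)\to\pm\infty$ and $1<\Im F_{\mu_c}(x+i)\leq 1+\lambda_c(\mathbb R)$, and concludes from the openness of $F_{\mu_c}$ and the property $\Im F_{\mu_c}(z)>\Im z$ that every line $\mathbb R+it$ with $t\geq 1+\int u^2\,d\mu_c$ is covered. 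You instead derive the uniform quantitative bound $|F_{\mu_c}(z)-z|\leq 2(c+1)/\Im z$ directly from the integral representation of $G_{\mu_c}$ (using mean zero and $m_2=c+1$, both legitimately read off the continued fraction), and then realize the surjectivity onto $\mathbb R+it$ by a homotopy/winding-number (Rouch\'e-type) argument on the circular-segment domains $\Omega_R$. Both proofs rest on the same analytic fact -- finiteness of the second moment forces $F_{\mu_c}(z)\approx z$ high in the half-plane -- but your version makes fully explicit the topological step that the paper compresses into ``it is clearly enough to show (i) and (ii)'', and it avoids invoking the representation $F=z-G_{\lambda_c}$ altogether, at the cost of a slightly longer computation and a different (equally admissible) threshold $t=3\sqrt{2(c+1)}$ versus the paper's $t\geq 1+\int u^2\,d\mu_c=2+c$. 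One cosmetic remark: in the argument-principle step what you need is that $G_{\mu_c}$ is zero-free on $\mathbb C^+$ (so that $F_{\mu_c}=1/G_{\mu_c}$ is holomorphic there); calling $F_{\mu_c}$ itself ``zero-free'' is true but irrelevant to that step.
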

\begin{proof}
Since $F_{\mu_c}$ is analytic, hence open, on the upper half-plane,
and it increases the imaginary part, it is clearly enough to show
that (i) $\lim_{x\to\pm\infty}\Re F_{\mu_c}(x+i)=\pm\infty$ and 
(ii) there exist $M>N\in[0,+\infty)$ so that $N\leq\Im F_{\mu_c}(x+i)
\leq M$ for all $x\in\mathbb R$. 
To prove (i), just observe that, since $\mu_c$ is symmetric and has
all moments, there exists a positive measure $\lambda_c$, also having
all moments, so that $\lambda_c(\mathbb R)=\int_\mathbb Ru^2d\mu_c(u)
$ and
\begin{equation}\label{lambda}
F_{\mu_c}(z)=z-G_{\lambda_c}(z),\quad z\in\mathbb C^+.
\end{equation}
Since $\lambda_c$ is a finite positive measure,
$\lim_{x\to\pm\infty}G_{\lambda_c}(x+i)=0,$ so (i) follows trivially.
Part (ii) is equally simple. We have that 
$$
1<\Im F_{\mu_c}(x+i)=1+\int_{\mathbb R}\frac{1}{1+(x-u)^2}d\lambda_c
(u)\leq 1+\lambda_c(\mathbb R)=1+\int_\mathbb Ru^2d\mu_c(u).
$$
So the lemma is true for any $t\ge1+\int_\mathbb Ru^2d\mu_c(u)$.
\end{proof}
It follows now easily that $F_{\mu_c}$ is injective on the 
upper half-plane and $F_{\mu_c}(\mathbb C^+)$ contains $\mathbb C^+
+it$ for some $t>0$ depending on $c\in(-1,0].$ However,
in order to prove our theorem, we need to find a
larger set $C\supset\mathbb C^+$ which is mapped by $F_{\mu_c}$
bijectively onto the upper half-plane. To do this, we will
show that for {\em any} $t>0$ there exists a $C_t\supset
\mathbb C^++it$ so that $F_{\mu_c}(C_t)=\mathbb C^++it$ and 
$F_{\mu_c}$ is injective on $C_t$. This will clearly
guarantee that $\phi_{\mu_c}$ has an analytic extension
to $\mathbb C^++it$ for any $t>0$, and hence (based on
the previous lemma) a unique extension to $\mathbb C^+$,
concluding the proof of Theorem \ref{main} by an 
application of Theorem \ref{bvid}.
Our strategy will be as follows: for a fixed $t>0$ there
exists, by Lemma \ref{?}, a unique $s>q_0$ so that $t=\frac1i
F_{\mu_c}(is)$. On the other hand, equation \eqref{lambda}
guarantees that there must be a number $N=N(t,c)>0$ so that
$\Im F_{\mu_c}(x+it/2)<t$ for all $x\in\mathbb R$, $|x|>N$.
Since, by Remark \ref{+}, $F_{\mu_c}$ is locally injective
around all these points, we conclude that there are three
simple paths, one around $s$ and two in $\{z\in\mathbb C^+
\colon \Re z>N\}$ and $\{z\in\mathbb C^+
\colon \Re z<-N\}$, respectively, which are mapped by $F_{\mu_c}$
in $\mathbb R+it$. We shall argue that these paths can be
extended to a simple path $p_t$ containing all of them, with
the property that $F_{\mu_c}(p_t)=\mathbb R+it$. The correspondence,
if existing, must be bijective, by Remark \ref{+}, and we will define
$C_t$ to be the simply connected component of $\mathbb C\setminus
p_t$ which contains numbers with arbitrarily large imaginary part.
It will then be easy to prove that $C_t$ has the desired properties
for all $t>0$.
\begin{lemma}\label{cont}
With the above notations, there exists exactly one simple curve 
$p=p_t$, symmetric with respect to the imaginary axis, passing 
through the point $is\in i\mathbb R$ and so that $F_{\mu_c}(p_t)=
\mathbb R+it.$ Moreover, 
$F_{\mu_c}$ maps $p_t$ bijectively onto $\mathbb R+it.$
\end{lemma}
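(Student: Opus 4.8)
The plan is to realize $p_t$ as a connected component of the real-analytic level set
$$
L_t:=F_{\mu_c}^{-1}(\mathbb{R}+it)=\{z:\Im F_{\mu_c}(z)=t\}
$$
and to extract every claimed property from the local geometry of $F_{\mu_c}$ near $L_t$ together with the global differential equation \eqref{diffeqF}. First, since $\Im F_{\mu_c}=t>0$ on $L_t$, Remark~\ref{+} gives $F_{\mu_c}'\neq0$ on $L_t$; hence near each of its points $F_{\mu_c}$ is a biholomorphism carrying a piece of $L_t$ onto a piece of $\mathbb{R}+it$. Therefore $L_t$ is an embedded real-analytic one-dimensional submanifold of the domain of $F_{\mu_c}$ (that is, of $\mathbb{C}$ minus the poles), each connected component of which is either a simple closed curve or a properly embedded copy of $\mathbb{R}$, and $F_{\mu_c}$ restricts on each component to a local homeomorphism into $\mathbb{R}+it$. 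Moreover $L_t\subseteq\{\Im z<t\}$ (on $\mathbb{C}^+$ because $\Im F_{\mu_c}(z)>\Im z$, off $\mathbb{C}^+$ trivially); the reflection $\rho(z)=-\bar z$ maps $L_t$ onto itself and fixes $is$, since symmetry of $\mu_c$ gives $F_{\mu_c}(-\bar z)=-\overline{F_{\mu_c}(z)}$; and by Lemma~\ref{?}(1)--(2) the only point of $L_t$ on the imaginary axis is $is$ itself (for $y<q_0$ one has $\Im F_{\mu_c}(iy)<0<t$, at $y=q_0$ one has $\Im F_{\mu_c}=0$, and on $(q_0,+\infty)$ the map $y\mapsto\Im F_{\mu_c}(iy)$ is a bijection onto $(0,+\infty)$).

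Let $p_t$ be the connected component of $L_t$ containing $is$; it is $\rho$-symmetric. It cannot be a simple closed curve: $F_{\mu_c}|_{p_t}$ would then be a local homeomorphism from a compact connected space onto a subset of $\mathbb{R}+it$ that is open (local homeomorphisms are open), hence clopen, hence all of $\mathbb{R}+it$ --- impossible, as $\mathbb{R}+it$ is noncompact. So $p_t$ is a properly embedded line; parametrize it by arc length as $\gamma:\mathbb{R}\to\mathbb{C}$. Differentiating $\Im F_{\mu_c}(\gamma(\tau))\equiv t$ shows that $\frac{d}{d\tau}F_{\mu_c}(\gamma(\tau))=F_{\mu_c}'(\gamma(\tau))\gamma'(\tau)$ is real, and it is nonzero by Remark~\ref{+}; being continuous it keeps a constant sign, so $\tau\mapsto\Re F_{\mu_c}(\gamma(\tau))$ is strictly monotone. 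In particular $F_{\mu_c}$ is injective on $p_t$ and its image is an open interval $(a,b)+it$; since $\Re F_{\mu_c}(is)=0$ and $\rho$ reverses the orientation of $p_t$ while $F_{\mu_c}\circ\rho=-\overline{F_{\mu_c}}$, we get $a=-b$.

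It remains to show $b=+\infty$. Assume $b<\infty$ and orient $\gamma$ so that $\Re F_{\mu_c}(\gamma(\tau))$ increases to $b$ as $\tau\to+\infty$; then $F_{\mu_c}(\gamma(\tau))\to b+it$. Since $p_t$ is properly embedded, $\gamma(\tau)$ leaves every compact subset of the domain of $F_{\mu_c}$ as $\tau\to+\infty$, and it cannot accumulate at a pole (where $|F_{\mu_c}|\to\infty$), so $|\gamma(\tau)|\to\infty$ in $\mathbb{C}$. Feeding this into \eqref{diffeqF},
$$
F_{\mu_c}'(\gamma(\tau))=\gamma(\tau)F_{\mu_c}(\gamma(\tau))-F_{\mu_c}(\gamma(\tau))^2-c,
$$
and because $F_{\mu_c}(\gamma(\tau))\to b+it\neq0$ (recall $t>0$), the right-hand side has modulus tending to $\infty$. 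But $\frac{d}{d\tau}\Re F_{\mu_c}(\gamma(\tau))$ equals the real number $\frac{d}{d\tau}F_{\mu_c}(\gamma(\tau))$, of modulus $|F_{\mu_c}'(\gamma(\tau))|\to\infty$ and of constant sign, so $\Re F_{\mu_c}(\gamma(\tau))$ runs off to $+\infty$ instead of converging to $b$ --- a contradiction. Hence $b=+\infty$, and by $\rho$-symmetry $a=-\infty$, so $F_{\mu_c}$ maps $p_t$ bijectively onto $\mathbb{R}+it$. Finally, any simple curve $q$ with $F_{\mu_c}(q)=\mathbb{R}+it$ through $is$ lies in $L_t$ and, being connected, in the component $p_t$; since $F_{\mu_c}|_{p_t}$ is already a bijection onto $\mathbb{R}+it$, necessarily $q=p_t$, which gives uniqueness (and forces the symmetry).

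The genuinely delicate step is surjectivity ($b=+\infty$): it is the only place where the purely local and topological description of $L_t$ must be combined with the global analytic behaviour of $F_{\mu_c}$, here exploited through the Riccati relation \eqref{diffeqF}. (One could instead try to use the decay of $G_{\lambda_c}$ from \eqref{lambda} along the escaping arc, but that is more painful, since the arc may run to infinity tangentially to the real axis, where the naive bound $|G_{\lambda_c}(x+iy)|\le\lambda_c(\mathbb{R})/|y|$ is useless.)
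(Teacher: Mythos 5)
Your proof is correct, and its key step runs along a genuinely different (and arguably cleaner) route than the paper's. Both arguments continue the level curve through $is$ and both must rule out a finite asymptotic value of $F_{\mu_c}$ along an escaping branch, but the mechanisms differ. The paper starts from a small conformal arc at $is$, follows the connected component of $F_{\mu_c}^{-1}([0,+\infty)+it)$, allows the branch to end either at a pole (harmless for the bijection) or at infinity, and excludes a finite asymptotic value $d+it$ by passing to the inverse function: it derives the ODE \eqref{inverse} for $F_{\mu_c}^{-1}$ along $[0,d)+it$, splits into real and imaginary parts \eqref{rc}--\eqref{ic}, and extracts the contradiction $d^2=-t^2$ from the asymptotic ratio $r(v_n)/\iota(v_n)$. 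You instead treat $L_t=\{\Im F_{\mu_c}=t\}$ as a regular level set (using Remark \ref{+}), dispose of closed components by a compactness/openness argument, get injectivity from monotonicity of $\Re F_{\mu_c}$ along the arc-length parametrization, and kill the finite-endpoint case directly with the forward Riccati equation \eqref{diffeqF}: boundedness of $F_{\mu_c}$ along the branch excludes poles, the branch must escape to infinity, so $|F_{\mu_c}'|\to\infty$ there, and the real, sign-constant derivative of $\Re F_{\mu_c}$ along the curve then forces $\Re F_{\mu_c}\to+\infty$, contradicting convergence to $b$. This avoids the inverse-function computation entirely and absorbs the pole-ending case into the same dichotomy; the paper's computation, in exchange, gives more explicit asymptotic information about $F_{\mu_c}^{-1}$ near the putative asymptotic value. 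Your uniqueness and symmetry arguments (uniqueness of the intersection of $L_t$ with $i\mathbb{R}$ via Lemma \ref{?}(1)--(2), invariance under $z\mapsto-\bar z$) are also slightly more explicit than the paper's appeal to symmetry. One small point to tighten: you declare the arc-length parametrization to be $\gamma\colon\mathbb{R}\to\mathbb{C}$ at the outset, but a priori one end of the properly embedded component could have finite length (e.g.\ if it ran into a pole); this costs nothing, because in the only place where an unbounded parameter interval is used -- the contradiction under $b<\infty$ -- you have already shown $|\gamma(\tau)|\to\infty$, which forces the remaining arc length to be infinite, while in the case $b=+\infty$ no such claim is needed.
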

\begin{proof}
For an arbitrary $t>0$,
it is a consequence of Remark \ref{+} and Lemma \ref{?} that 
$F_{\mu_c}$ is conformal on a small enough ball centered at $is$. 
Thus, there exists a simple path $p_t^\varepsilon$, symmetric with respect to the
imaginary axis, which is mapped bijectively by $F_{\mu_c}$ onto some 
interval $(-\varepsilon,\varepsilon)+it$ for $\varepsilon>0$
small enough. We shall show that $p=p_t^\varepsilon$ extends analytically
to an infinite path, denoted by $p_t$, sent by $F_{\mu_c}$ in 
$\mathbb R+it$, on which $F_{\mu_c}$ has no critical points and
so that the limit at infinity of $F_{\mu_c}$ along either half of 
$p_t$ is infinite. Since $F_{\mu_c}$ is meromorphic, hence open, this 
will suffice to prove our lemma. 
Indeed, let us consider the connected component $p^+_t$ of 
$F_{\mu_c}^{-1}([0,+\infty)+it)$ which contains $is$.
It is clear that, as $F_{\mu_c}$ is meromorphic on $\mathbb C$,
the path $p_t^+$ must end either at infinity or at a pole of 
$F_{\mu_c}$, call it $\zeta$. If it ends at a pole, it follows easily 
that  $F_{\mu_c}(p_t^+)=[0,+\infty)+it$ and the correspondence 
(by Remark \ref{+}) is bijective.\footnote{This situation will
turn out later not to occur, but at this moment this is not important for our proof.} 
Let us consider the second case, namely when $p_t^+$ ends at
infinity. In this case, the possibility of having $F_{\mu_c}(p_t^+)=
[0,d)+it$ for some $0<d<+\infty$ must be discarded first:
this would correspond to when $F_{\mu_c}$ has $d+it$ as an asymptotic 
value at infinity along $p_t^+$. Thus, let us show that
$$
\lim_{z\to\infty,z\in p_t^+}F_{\mu_c}(z)=\infty.
$$
Assume towards contradiction that this limit is finite, and call
it $x$ (the case when the limit does not exist is easily discarded).
Of course, $\Im x=t>0$, $\Re x=d>0$. We shall use Equation 
\eqref{diffeqF} to obtain a contradiction: 
it follows from it that
the differential equation satisfied by the inverse $F_{\mu_c}^{-1}$ 
(defined on a neighbourhood of $[0,d)+it$ and with values in a 
neighbourhood of $p_t^+$) is
\begin{equation}\label{inverse}
(F_{\mu_c}^{-1})'(w)=\frac{1}{-w^2+wF_{\mu_c}^{-1}(w)-c}.
\end{equation}
Define $r(v)=\Re F_{\mu_c}^{-1}(v+it),
\iota(v)=\Im F_{\mu_c}^{-1}(v+it),$ $0\le v<d$.
Equation \eqref{inverse} translates into
\begin{equation}\label{rc}
r'(v)=\frac{v(r(v)-v)+t(t-\iota(v))-c}{\left[
v(r(v)-v)+t(t-\iota(v))-c\right]^2+
\left[t(r(v)-2v)+v\iota(v)\right]^2}
\end{equation}
\begin{equation}\label{ic}
\iota'(v)=-\frac{t(r(v)-2v)+v\iota(v)}{\left[
v(r(v)-v)+t(t-\iota(v))-c\right]^2+
\left[t(r(v)-2v)+v\iota(v)\right]^2}
\end{equation}
As noted before, $\lim_{v\uparrow d}F_{\mu_c}^{-1}(v+it)=\infty$,
so at least one of $r(v),\iota(v)$ must be unbounded. 
Thus, at least one of $r'(v),\iota'(v)$ must be unbounded.
From equations \eqref{rc} and \eqref{ic} it follows that in order
for any of $r'(v),\iota'(v)$ to be unbounded, it is necessary that
there exists a sequence $\{v_n\}_{n\in\mathbb N}\subset[0,d)$ tending
to $d$ so that 
$$
\lim_{n\to\infty}\left[
v_n(r(v_n)-v_n)+t(t-\iota(v_n))-c\right]^2+
\left[t(r(v_n)-2v_n)+v_n\iota(v_n)\right]^2=0.
$$
As noted in the comments
preceding Remark \ref{+}, $\iota(v_n)\leq t$.
Let us choose a subsequence of $\{v_n\}_{n\in\mathbb N}$, also denoted 
by $v_n$, on which $\iota(v_n)$ converges. 
If it converges to $\ell\in
(-\infty,t]$,
then we know that $r(v_n)\to\infty$ (since $F_{\mu_c}^{-1}(v_n)\to
\infty$).
But then, in order for the above displayed limit to hold, it is
also necessary that $\iota(v_n)$ tend to infinity, a contradiction.
So we must have that both $r(v_n)$ and $\iota(v_n)$ tend to infinity
(plus or minus). Then
$
\lim_{n\to\infty}\left[
v_n(r(v_n)-v_n)+t(t-\iota(v_n))-c\right]^2=0
\implies
\lim_{n\to\infty}v_n\frac{r(v_n)}{\iota(v_n)}=t,$
or equivalently $\lim_{n\to\infty}\frac{r(v_n)}{\iota(v_n)}=
\frac{t}{d}.$ Similarly, 
$\lim_{n\to\infty}\frac{r(v_n)}{\iota(v_n)}=
\lim_{n\to\infty}\frac{r(v_n)-2v_n}{\iota(v_n)}=
-\frac{d}{t}.$
So $\frac{d}{t}=-\frac{t}{d},$ which implies
$d^2=-t^2$, a contradiction.
Thus, it is impossible for $F_{\mu_c}^{-1}(v_n)$ to tend to infinity
when $v_n\to d$, so $F_{\mu_c}$ cannot have a finite asymptotic 
value along $p_t^+$. Since $\mu_c$ is symmetric, this concludes the proof of our lemma.
\end{proof}
By proving the previous lemma, we have also proved that the inverse 
$F_{\mu_c}^{-1}$ of $F_{\mu_c}$ admits an analytic extension around
$i(0,+\infty)$ and around $\mathbb R+it$ for any $t>0$.
We shall argue that all these extensions agree with each other,
and provide us with an analytic map $F_{\mu_c}^{-1}:\mathbb C^+
\to\mathbb C$ which (1) decreases the imaginary part, and (2)
satisfies $\lim_{y\to+\infty}F_{\mu_c}^{-1}(iy)/iy=1$.
Let us denote 
$$
t_0=\inf\{t>0|\mathbb R+ir\subset F_{\mu_c}(\mathbb C^+\cup\mathbb R)
\forall r\ge t\},
$$
and $s_0$ the unique number greater than $q_0$ so that $F_{\mu_c}(i
s_0)=it_0$.
As noted after the proof of Lemma \ref{surj}, it is clear that
$F_{\mu_c}^{-1}:\mathbb C^++it_0\to\mathbb C^+$ satisfies both 
(1) and (2). 
Clearly, this function has, by Remark \ref{+} and Lemma \ref{?}, a unique
analytic continuation to a small enough neighbourhood of 
$i[0,+\infty)$ in $\mathbb C^+$, which  we will still denote by
$F_{\mu_c}^{-1}$.
Now, for an arbitrary $t\in(0,t_0)$, we have proved in Lemma
\ref{cont} that $F_{\mu_c}^{-1}$ admits an analytic continuation
to a small enough neighbourhood of $\mathbb R+it$ which coincides
on a neighbourhood of $it$ with $F_{\mu_c}^{-1}.$ Since
$\mathbb C^+$ is simply connected and $\displaystyle\mathbb C^+=
(\mathbb C^++it_0)\cup\bigcup_{t\in(0,t_0)}(\mathbb R+it)$,
we conclude that $F_{\mu_c}^{-1}$ admits a unique extension to
the upper half-plane, with values in $\mathbb C$.
Let us denote $C=F_{\mu_c}^{-1}(\mathbb C^+)$.
It follows easily from the identity principle for analytic functions
that $F^{-1}_{\mu_c}(F_{\mu_c}(z))=z$ for all $z\in C$ and
$F_{\mu_c}(F^{-1}_{\mu_c}(z))=z$ for all $z\in\mathbb C^+$.
Moreover, $\Im F_{\mu_c}(z)>\Im z$ for all $z\in\mathbb C^+\cup
\mathbb R$, and if $z\in\mathbb C^-$, $F_{\mu_c}(z)\in\mathbb C^+,$ 
then it is obvious that the inequality $\Im F_{\mu_c}(z)>\Im z$ still 
holds. Thus, we conclude that both (1) and (2) are satisfied by
$F_{\mu_c}^{-1}.$
Now our theorem  follows: we define $\phi_{\mu_c}(z)=
F_{\mu_c}^{-1}(z)-z,$ $z\in\mathbb C^+$. This function is obviously
well defined, and maps the upper half-plane into $\mathbb C^-$, thus
satisfying the requirements of Theorem \ref{bvid}.
\end{proof}
Since the classical Gaussian $\gamma$ equals $\mu_0$, we are now able 
to conclude its free infinite divisibility from Theorem \ref{main}
and Theorem \ref{bvid}. 
\begin{corollary}\label{Gauss}
The classical normal distribution $d\gamma(t)=\frac{1}{\sqrt{2\pi}}
e^{-t^2/2}dt$ is freely infinitely divisible.
\end{corollary}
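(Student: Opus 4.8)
The plan is to obtain the corollary from Theorem~\ref{main} by a short limiting argument, since the proof of Theorem~\ref{main} was carried out only for $c$ in the \emph{open} interval $(-1,0)$. Recall from the introduction (following Kerov \cite{Kerov:1998:interlacing}) that the standard Gaussian $\gamma$ is exactly $\mu_0$, the member of the Askey--Wimp--Kerov family at parameter $c=0$. Hence it suffices to show that $\mu_0$ inherits free infinite divisibility from the measures $\mu_c$ with $c\in(-1,0)$, which Theorem~\ref{main} has already shown to be $\boxplus$-infinitely divisible.

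The first step is the weak convergence $\mu_c\to\gamma$ as $c\uparrow 0$. The cleanest route is through moments: the Jacobi parameters of $\mu_c$ read off from the continued fraction \eqref{C} depend polynomially, hence continuously, on $c$, so each moment $m_k(\mu_c)$ is a continuous function of $c$ and therefore $m_k(\mu_c)\to m_k(\mu_0)=m_k(\gamma)$ for every $k$. Since the Gaussian satisfies Carleman's condition it has a determinate moment problem, and convergence of all moments forces $\mu_c\to\gamma$ weakly. Alternatively one may argue directly from the density formula \eqref{mu_c}, using that $D_{-c}(z)$ is entire jointly in $(c,z)$, so the densities converge locally uniformly.

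The second step is to invoke the standard fact --- a consequence of Theorem~\ref{bvid} together with the good behaviour of the Voiculescu transform under weak limits, as recorded in the text following Theorem~\ref{bvid} --- that the class of $\boxplus$-infinitely divisible probability measures is closed under weak convergence. Applying this to a sequence $\mu_{c_n}$ with $c_n\uparrow 0$ yields that $\gamma=\mu_0$ is $\boxplus$-infinitely divisible, which is the assertion. The other endpoint is handled identically: $\mu_c\to\delta_0$ as $c\downarrow -1$, and $\delta_0$ is trivially $\boxplus$-infinitely divisible, so the full range $[-1,0]$ in Theorem~\ref{main} is accounted for.

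The genuine difficulty is already behind us: it is the content of Theorem~\ref{main} and Lemmas~\ref{?}--\ref{cont}. What remains is routine, the only point meriting a word of care being the determinacy of the Gaussian moment problem --- this is what legitimizes passing from moment convergence to weak convergence and, in turn, guarantees that $\phi_\gamma$ arises as the locally uniform limit, on a common truncated cone at infinity, of the transforms $\phi_{\mu_c}$, so that the analytic extension $\mathbb{C}^+\to\mathbb{C}^-$ survives the limit and Theorem~\ref{bvid} applies to $\gamma$.
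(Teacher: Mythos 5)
Your proposal is correct and matches the paper's intended argument: the paper's corollary simply cites Theorem~\ref{main} (whose statement covers $c=0$) together with Theorem~\ref{bvid}, the point being exactly the one you make explicit --- that weak limits preserve $\boxplus$-infinite divisibility (as remarked after Theorem~\ref{bvid}), so the endpoint $c=0$ follows from the open-interval case via $\mu_c\to\gamma$ weakly. Your added details (moment convergence from the continued fraction \eqref{C}, Carleman determinacy of the Gaussian) are a sound way to justify the weak convergence that the paper leaves implicit.
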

Next we shall discuss some properties of the Askey-Wimp-Kerov 
distributions $\mu_c$ with parameter $c\in(0,1)$.
As mentioned in the introduction, numerical computations show that
$\mu_c$ is not freely infinitely divisible for certain values of $c>0$.
More specific, direct (computer assisted) calculation of the 
Hankel determinants of the free cumulants of $\mu_{9/10}$, for example,
shows that the $97^{\rm th}$ determinant is negative, and thus the
cumulant series of $\mu_{9/10}$ is not the moment sequence of a 
positive measure on $\mathbb R$; Theorem \ref{bvid} allows us then
to conclude that $\mu_{9/10}$ is not freely infinitely divisible.
(More such computations have been performed, and they seem to indicate
that the size of the first Hankel matrix whose determinant is negative rather tends to
decrease as $c>0$ increases: for example, the $83^{\rm th}$ Hankel
determinant corresponding to $\mu_1$ is negative.)

However, the family $\{\mu_c:c\in(0,1]\}$ turns out to be of some
interest from the point of view of the arithmetic properties of free additive
convolution. This subject is not new (implicit results on the arithmetic of free convolutions
can be found in many works), but it is the rather recent preprint 
\cite{ChistyakovGoetze:2005:arithmetic} that has first addressed the problem of the decomposability
of measures in ``free convolution factors'' in an explicit and systematic way. However, the 
subject is by no means exhausted, the number of results is rather small (we would like to
mention among them a remarkable idecomposability result given in 
\cite{BercoviciWang:2008:indecomposable}), so we feel it is worth mentioning the following
by-product of equation \eqref{C} and our main free infinite divisibility result from Theorem \ref{main}.

Indeed, it follows from the continued fraction expansion
\eqref{C} and analytic continuation that for any $c\in(-1,0]$,
\begin{equation}\label{c+1}
G_{\mu_{c+1}}(z)=\frac{z-F_{\mu_c}(z)}{c+1},\quad z\in\mathbb C.
\end{equation}
In addition, for any fixed $c\in(-1,0]$, 
the dilation transformation $\mu_c\mapsto\mu_c^1$ induced by 
$F_{\mu_c^1}(z)=\frac{1}{\sqrt{c+1}}F_{\mu_c}(z\sqrt{c+1})$
provides us with a probability $\mu_c^1$ of variance one, and
thus
\begin{equation}\label{var}
F_{\mu_c^1}(z)=z-\sqrt{c+1}G_{\mu_{c+1}}(z\sqrt{c+1})=
z-G_{\tilde{\mu}_{c+1}}(z),\quad z\in\mathbb C,
\end{equation}
where of course $\tilde{\mu}_{c+1}$ is a probability measure
obtained by a dilation with a factor of $\sqrt{1+c}$ of 
${\mu}_{c+1}$.
It is noted in \cite[Theorems 1.2 and 1.6]{BN} (see also references therein) that a probability
measure $\lambda$ with variance one and first moment zero is freely 
infinitely divisible if and only if there exist two probabilities
$\nu,\rho$ on $\mathbb R$ so that 
\begin{enumerate}
\item[(a)] $F_\rho(z)=z-G_\nu(z)$ for all $z\in\mathbb C^+$,
\item[(b)] $F_{\lambda}(z)=z-G_{\nu\boxplus\mathcal S}(z)$, $z\in
\mathbb C^+$, where $\lambda=\left(\rho^{\boxplus 2}\right)^{\uplus 
1/2}$ and $\mathcal S$ is the centered Wigner (semicircular) 
distribution of variance one. (Operation $\uplus$
is called Boolean convolution - see \cite{SW}.)
\end{enumerate}
We apply this observation to $\lambda=\mu_c^1$ to conclude from
\eqref{var} that the Voiculescu transform $\phi_{\mu_c^1}$ of 
$\mu_c^1$ is also the Cauchy-Stieltjes transform of a probability 
measure $\nu_c$ playing the role of $\nu$ in (a) and (b) above and 
moreover
$$
\tilde{\mu}_{c+1}=\nu_c\boxplus\mathcal S.
$$
This provides us with another interesting decomposition
result in the arithmetic of free additive convolution, stating that 
\begin{remark}
For any $c\in(-1,0]$, the Askey-Wimp-Kerov distribution
$\mu_{c+1}$ can be written as a free additive convolution of the 
Wigner law with another probability $\nu_c$ on $\mathbb R$.

\end{remark}
\bibliography{Gauss}
\bibliographystyle{plain}
\end{document}